\numberwithin{equation}{section}
\newtheorem{theorem}{Theorem}[section]
\newtheorem{lemma}[theorem]{Lemma}
\newtheorem{remark}{Remark}[section]
\newtheorem{corollary}[theorem]{Corollary}
\providecommand{\keywords}[1]
{
  \small	
  \textbf{\textit{Keywords:}} #1
}
\newcommand{\MSC}[1]{%
  \small
  \textbf{\textit{Mathematics Subject Classification:}} #1
}
\title{ An improvement toward global boundedness in a fully parabolic chemotaxis with singular sensitivity in any dimension    }
\author{
    Minh Le\thanks{ Institute For Theoretical Sciences, Westlake University, China \texttt{(leminh@westlake.edu.cn)}} }
\date{}
\begin{document}
\maketitle

\begin{abstract}
This paper deals with the problem of global solvability and boundedness of classical solutions to a fully parabolic chemotaxis system with singular sensitivity in any dimensional setting. In particular, We show that the system
\begin{equation*} 
\begin{cases}
u_t = \Delta u - \chi \nabla \cdot \left( \dfrac{u}{v} \nabla v \right), \\
v_t = \Delta v - v + u,
\end{cases}
\end{equation*}
posed in a bounded domain $\Omega \subset \mathbb{R}^n$ with $n \geq 3$, admits a global bounded classical solution provided that $\chi \in (0,\chi_0)$ with $\chi_0 > \sqrt{\frac{2}{n}}$ can be determined explicitly. This result extends several existing works, which established global boundedness under the more restrictive condition $\chi < \sqrt{\frac{2}{n}}$, and shows that this threshold is not an optimal upper bound for preventing blow-up.

\end{abstract}
\keywords{Chemotaxis,   global boundedness, singular sensitivity}\\
\MSC{35B35, 35K45, 35K55, 92C15, 92C17}

\numberwithin{equation}{section}

\newtheorem{Corollary}{Corollary}[theorem]
\allowdisplaybreaks

\section{Introduction}
We consider the following system arising from a chemotaxis model in a bounded domain $\Omega \subset \mathbb{R}^n$, where $n \geq 3$:
\begin{equation}  \label{1}
    \begin{cases}
        u_t = \Delta u -\chi \nabla \cdot \left (  \frac{u}{v} \nabla v \right ) , \qquad &\text{in } \Omega \times (0,T_{\rm max}),\\
         v_t = \Delta v - v+u, \qquad &\text{in } \Omega \times (0,T_{\rm max}),
    \end{cases}
\end{equation}
where $\chi>0$ and $T_{\rm max} \in (0,\infty]$ is the maximal existence time. The system is endowed with the homogeneous Neumann boundary condition
\begin{equation} \label{bdry}
    \frac{\partial u}{\partial \nu} =\frac{\partial v}{\partial \nu} =0 ,\qquad (x,t) \in \partial \Omega \times (0, T_{\rm max}),
\end{equation}
and initial conditions
\begin{equation} \label{initial}
    \begin{cases}
        u_0 \in C^{0} (\bar{\Omega}) \text{ is nonnegative with } \int_\Omega u_0 >0,  \\
      v_0 \in W^{1, \infty}(\Omega), \quad v_0> 0 \quad \text{in } \bar{\Omega}.
    \end{cases}
\end{equation}
 The system \eqref{1} was originally introduced in the seminal work \cite{Keller}, where it was proposed as a mathematical model to describe the phenomenon of chemotaxis—namely, the directed movement of cells or microorganisms toward or away from chemical signals in their environment. This mechanism plays a fundamental role in a variety of biological processes, such as the aggregation of cellular slime molds, immune response behavior, and the formation of patterns in bacterial populations.
In the context of system \eqref{1}, the function \( u(x,t) \) typically denotes the density of a cell population (such as bacteria or amoebae), while \( v(x,t) \) represents the concentration of a chemical substance that influences the movement of these cells at a given spatial location \( x \) and time \( t \).

\subsection{Literature review}

In this subsection, let us briefly recall some mathematical results associated with the system \eqref{1}. 

A well-studied variant of system \eqref{1} arises when the second equation is replaced by the elliptic equation
\[
0 = \Delta v - v + u,
\]
leading to the so-called parabolic--elliptic chemotaxis system. Under the assumption of radially symmetric initial data, it was shown in \cite{NaSe3} that solutions exist globally in time provided that the chemotactic sensitivity satisfies
\(
\chi < \frac{2}{n-2},
\)
whereas finite-time blow-up may occur if $\chi > \frac{2n}{n-2}$. This result was subsequently strengthened in \cite{Biler-1999}, where it was shown that the condition $\chi < \frac{2}{n}$ guarantees the global existence of solutions for arbitrary initial data. Moreover, global boundedness of solutions under this assumption was established in \cite{KMT}.

In two spatial dimensions, a substantially stronger result was obtained in \cite{Fujie+Senba}, where it was shown that blow-up is excluded for all $\chi>0$, thereby revealing a fundamental qualitative distinction between two-dimensional and higher-dimensional dynamics. Very recently, the author in \cite{Kurt+2025} further improved these results in higher dimensional domains by showing that the parabolic-elliptic system \eqref{1} possesses a global bounded classical solution provided that
\[
\chi < \frac{2}{n} + \frac{2n-1}{2n^3} \sqrt{\frac{n}{2n+2}},
\qquad n \ge 3.
\]

Despite these substantial advances, the precise critical value of $\chi$ separating global existence from finite-time blow-up in the parabolic--elliptic setting remains an open problem. For further developments concerning parabolic--elliptic chemotaxis systems with singular sensitivity, we refer the reader to \cite{zh, Zhao1, Minh6, Kurt, Kurt+Shen, minh-kurt}.

We now turn our attention to the fully parabolic case. 
In two spatial dimensions, it was shown that the condition $\chi \leq 1$ guarantees the global solvability of solutions to system~\eqref{1} \cite{Biler-1999}. 
This threshold was later slightly improved, and blow-up was excluded whenever $\chi < 1.015$ \cite{Lankeit_2016}. 
Moreover, under the additional assumption that the initial data are radially symmetric, global boundedness was obtained for a modified system in which the left-hand side of the second equation is replaced by $\tau v_t$, provided that $\tau>0$ is sufficiently small; in this setting, solutions remain global and bounded for arbitrary $\chi>0$ \cite{Fujie+Senba-1}.

In higher-dimensional domains, global existence of solutions was established, and global boundedness was further derived under the condition
\[
\chi < \sqrt{\frac{2}{n}}
\]
\cite{Winkler+2011, Fujie}. 
If the left-hand side of the first equation is replaced by $\tau u_t$ with sufficiently small $\tau>0$, then global existence and boundedness hold for all
\[
\chi < \frac{n}{n-2}
\]
\cite{Fujie+Senba}. In contrast, when $\tau$ is sufficiently small and $\chi > \frac{n}{n-2}$, solutions may become arbitrarily large \cite{Winkler+2022_JDE}. More recently, without imposing any smallness assumption on $\tau$, it was proved in \cite{Ahn+Kang+Lee} that the condition
\[
\chi < \frac{4}{n}
\]
ensures global existence and boundedness of solutions, thereby improving earlier results \cite{Winkler+2011, Lankeit_2016}. Beyond classical solutions, the existence of global weak solutions was obtained under the restriction
\[
\chi < \sqrt{\frac{n+2}{3n-4}}
\]
\cite{Winkler+2011}. By introducing a slightly more general notion of weak solutions, global existence of radially symmetric solutions was further established provided that
\[
\chi < \sqrt{\frac{n}{n-2}}
\]
\cite{Winkler+Stinner_2011}. Finally, the existence of global generalized solutions was shown under the assumption
\[
\chi <
\begin{cases}
\infty, & \text{if } n = 2,\\[0.3em]
\sqrt{8}, & \text{if } n = 3,\\[0.3em]
\dfrac{n}{n-2}, & \text{if } n \ge 4,
\end{cases}
\]
see \cite{Winkler+Lankeit_2017}. For further developments concerning solvability and long-time behavior of variants of system~\eqref{1}, we refer the reader to 
\cite{Zhao2, Winkler_Yokota_2018, Le_2026, Le_2025_1, Le_2025_2, Li_Xie_2024, Li_Xie_2026}.

\subsection{Motivation and Contribution}
In two spatial dimensions, the author in \cite{Biler-1999} investigated the following energy functional to obtain global existence of solutions to \eqref{1} 
\[
A(t)
= \int_\Omega u \ln u - a \int_\Omega u \ln v
\qquad \text{for all } t>0,
\]
where $a>0$. This result was later refined in \cite{Lankeit_2016}, where the author demonstrated that global existence and boundedness by employing an additional term $\int_\Omega \frac{|\nabla v|^2}{v}$ to the functional $A(t)$. More precisely, the functional
\[
B(t)
= \int_\Omega u \ln u
- a \int_\Omega u \ln v
+ b \int_\Omega \frac{|\nabla v|^2}{v}
\qquad \text{for all } t>0,
\]
with $b>0$, was investigated to obtain a stronger threshold for $\chi$.

In higher dimensions ($n \ge 3$), as mentioned before, \cite{Winkler+2011} established the existence of a global classical solution under the assumption $\chi < \sqrt{\frac{2}{n}}$. The proof relied on the functional
\[
C(t)
= \int_\Omega u^p v^{-r}
\qquad \text{for all } t>0,
\]
where $p>1$ and $r>1$. This naturally raises a question analogous to that posed in \cite{Lankeit_2016}: namely, whether the approach developed in \cite{Winkler+2011} already yields a sharp criterion for global boundedness, or whether it can be refined in a manner similar to the two-dimensional case.

Motivated by the aforementioned results, we propose a new approach showing that the value $\sqrt{\frac{2}{n}}$ is not the sharp upper bound for blow-up prevention in arbitrary spatial dimensions. To this end, we introduce the energy functional
\[
F_\lambda(u,v)
= \int_\Omega u^p v^{-r}
+ \lambda \int_\Omega \frac{|\nabla v|^{2p}}{v^{p+r}}
+ \int_\Omega v^{p-r}
\qquad \text{for all } t>0,
\]
where $p>1$, $r = \frac{p-1}{2}$, and $\lambda>0$.

The inclusion of the additional term
\[
\int_\Omega \frac{|\nabla v|^{2p}}{v^{p+r}}
\]
provides extra flexibility in the energy estimates and allows us to extend the admissible upper bound on $\chi$ beyond that obtained in \cite{Winkler+2011}. The construction of this term is inspired by \cite{Winkler+2022}, where an energy functional containing
\[
\int_\Omega \frac{|\nabla v|^q}{v^{q-1}}
\]
was introduced to address chemotaxis-consumption systems with weakly singular sensitivity.

In addition, the presence of the term $\int_\Omega v^{p-r}$ in the functional $F_\lambda(u,v)$ enables us to derive a direct bound for $\int_\Omega u^p v^{-r}$, thereby avoiding the iterative arguments used in \cite{Fujie,MT_2017} and eliminating the need for any a priori estimate on $\int_\Omega v^{p-r}$.

\subsection{Main result}
Our main result on the global existence and boundedness of solutions to system \eqref{1} can now be stated as follows.

\begin{theorem} \label{thm}
    Let $\Omega \subset \mathbb{R}^n$, with $n \geq 3$, be a bounded domain with smooth boundary. Then there exists $\chi_0 > \sqrt{\frac{2}{n}}$ such that, for any $\chi \in (0, \chi_0)$, the system \eqref{1}, with the boundary condition \eqref{bdry} and initial condition \eqref{initial}, admits a unique global positive classical solution $(u,v) \in \left[ C^{2,1}\left( \bar{\Omega} \times (0,\infty) \right) \cap C^0 \left( \bar{\Omega} \times [0,\infty) \right) \right]^2$ such that 
    \begin{align*}
       \sup_{t>0} \left\| u(\cdot,t) \right\|_{L^\infty(\Omega)} < \infty.
    \end{align*}
\end{theorem}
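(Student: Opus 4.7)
My plan is to combine standard local well-posedness for \eqref{1} (which yields a unique maximal classical solution $(u,v)$ on $[0,T_{\max})$) with the extensibility criterion that $T_{\max}=\infty$ as long as $\|u(\cdot,t)\|_{L^\infty(\Omega)}$ remains bounded. A pointwise lower bound $v \geq c > 0$ follows from $v_t \geq \Delta v - v$, mass conservation $\int u = \int u_0$, and the parabolic comparison principle. The $L^\infty$ bound on $u$ will be deduced from a uniform-in-time bound on $F_\lambda(u,v)$ for suitable $p>1, \lambda>0$, which yields $u \in L^p(\Omega)$ for $p>n/2$; a standard Moser iteration or semigroup bootstrap then upgrades this to $L^\infty$.

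The key computation is the derivative $\tfrac{d}{dt} F_\lambda$. Differentiating the first summand $\int u^p/v^r$ and integrating by parts via both PDEs produces the dissipation $-p(p-1)\int u^{p-2} v^{-r}|\nabla u|^2$, a cross term proportional to $\int u^{p-1}v^{-r-1}\nabla u\cdot\nabla v$, a $|\nabla v|^2$ dissipation of the form $-r[p\chi+r+1]\int u^p v^{-r-2}|\nabla v|^2$, and the reaction balance $r\int u^p v^{-r} - r\int u^{p+1}v^{-r-1}$. For the second summand $\lambda \int |\nabla v|^{2p}/v^{p+r}$, the Winkler identity $\tfrac12\Delta|\nabla v|^2 = |D^2v|^2 + \nabla v\cdot\nabla\Delta v$ together with the boundary estimate $\partial_\nu|\nabla v|^2 \leq 2\kappa_\Omega|\nabla v|^2$ produces a Hessian dissipation and, crucially, the negative term $-\lambda(p+r)\int u|\nabla v|^{2p}v^{-p-r-1}$ coming from the $-u$ source in $v_t$. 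The third summand $\int v^{p-r}$ is expanded using the Young pairing $uv^r \leq \tfrac{1}{p+1}u^{p+1}v^{-r-1}+\tfrac{p}{p+1}v^{p-r}$, which is exactly the algebraic relation that motivates the choice $r=(p-1)/2$.

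The central algebraic step is to absorb the cross term and the residual $\int u^{p+1}v^{-r-1}$ into the various dissipations. Young's inequality with an optimal parameter splits the cross term into multiples of the $|\nabla u|^2$ and $|\nabla v|^2$ dissipations, reducing the problem to a quadratic inequality in $\chi$. The surplus $\int u^{p+1}v^{-r-1}$ not absorbed by the first-summand dissipation $-r\int u^{p+1}v^{-r-1}$ is handled by a Gagliardo--Nirenberg interpolation against the Hessian dissipation and the $|\nabla v|^{2p}/v^{p+r}$ integrand, with $\lambda$ chosen as an explicit algebraic function of $p, r, \chi$ to optimize the balance. The coercivity condition for the whole quadratic form then reduces to an inequality $\chi^2 < a(p,n)$ for a continuous function $a$ with $a(1,n)=2/n$; a direct first-order computation at $p=1$ gives $\partial_p a(1,n) > 0$, so defining $\chi_0 := \sup_{p>1}\sqrt{a(p,n)}$ yields $\chi_0 > \sqrt{2/n}$.

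The main obstacle will be ensuring that the improvement over $\sqrt{2/n}$ survives the bookkeeping. Every Young inequality and every Gagliardo--Nirenberg interpolation introduces a parameter whose value affects the coefficient $a(p,n)$, and the proof that $\partial_p a(1,n) > 0$ requires that at the critical scale $p=1$ these parameters are chosen so as to saturate the classical threshold $\sqrt{2/n}$, so that the slack opened up by $p>1$ translates into a strict improvement of $\chi_0$. Once $\tfrac{d}{dt}F_\lambda + F_\lambda \leq C$ is proved with $C$ depending only on $\int u_0$, $\|v_0\|_{W^{1,\infty}}$, $\Omega$, $p$, $\lambda$, $\chi$, Grönwall's inequality yields $\sup_{t\in(0,T_{\max})} F_\lambda(u,v)(t) < \infty$, hence a uniform $L^p$ bound on $u$, from which the $L^\infty$ bound and global existence follow by the bootstrap of stage one.
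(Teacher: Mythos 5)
Your skeleton (maximal solution plus extensibility, positive lower bound for $v$, uniform bound on $F_\lambda$, hence an $L^p$ bound with $p>\frac n2$, then a semigroup/Moser bootstrap to $L^\infty$) matches the paper, and so does the treatment of the boundary integral via the curvature inequality $\partial_\nu|\nabla v|^2\le c|\nabla v|^2$. But the core mechanism --- where the room beyond $\sqrt{2/n}$ actually comes from --- is misidentified. You claim the crucial helpful output of the second summand is the negative term $-\lambda(p+r)\int_\Omega u\,v^{-p-r-1}|\nabla v|^{2p}$ produced by the source $u$ in the $v$-equation. That is only the contribution of $-(p+r)\int_\Omega v^{-p-r-1}|\nabla v|^{2p}v_t$; the other half of the time derivative, $2p\int_\Omega v^{-p-r}|\nabla v|^{2p-2}\nabla v\cdot\nabla u$, integrates by parts to $+2p(p+r)\int_\Omega u\,v^{-p-r-1}|\nabla v|^{2p}$ plus Hessian terms, so the net coefficient is $+(2p-1)(p+r)>0$: this is a \emph{bad} term that must itself be absorbed (in the paper, together with $\int_\Omega u\,v^{-p-r}|\nabla v|^{2p-2}|D^2v|$, by Young and H\"older against $\int_\Omega u^{p+1}v^{-r-1}$ and the Hessian dissipation, the latter first converted into $\int_\Omega v^{-p-r-2}|\nabla v|^{2p+2}$ through Winkler-type functional inequalities for $\int\phi^{-l}|\nabla\phi|^{q-2}|D^2\phi|^2$ and $\int\phi^{-l+2}|\nabla\phi|^{q-2}|D^2\ln\phi|^2$, which your proposal omits). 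The genuinely helpful output of the second summand is the dissipation $-\frac{p-1}{4}\int_\Omega v^{-p-r-2}|\nabla v|^{2p+2}$, whose coefficient is positive precisely because $r=\frac{p-1}{2}$; it is this term, together with the retained fraction of $-r\int_\Omega u^{p+1}v^{-r-1}$, that absorbs by Young the excess $(\chi^2p-1)_+\int_\Omega u^pv^{-r-2}|\nabla v|^2$ left over when $\chi^2p>1$. Your "surplus $\int_\Omega u^{p+1}v^{-r-1}$'' is in fact dissipation, not surplus, and the Gagliardo--Nirenberg step you invoke for it plays no role here.

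Second, the claimed coercivity structure "$\chi^2<a(p,n)$ with $a(1,n)=2/n$ and $\partial_pa(1,n)>0$'' does not match the problem: the classical threshold arises from the two constraints $\chi^2p<1$ and $p>\frac n2$, so the relevant limit is $p\downarrow\frac n2$, not $p=1$, and no slack is gained by perturbing around $p=1$. The correct quantitative statement is that the extra dissipation permits $\chi^2p<1+\delta$ for an explicit $\delta(n)>0$ determined by the ratio of the available dissipation constants, whence $\chi_0=\sqrt{2(1+\delta)/n}$; producing that $\delta$ is exactly the step your outline leaves vague or gets wrong. Finally, note that boundedness of $F_\lambda$ gives $\int_\Omega u^pv^{-r}$, not $\int_\Omega u^p$; since $v$ has no a priori upper bound independent of $u$, passing to $\sup_t\|u\|_{L^p}$ requires the additional interpolation $\|u\|_{L^p}\le c\|v\|_{L^{pr/(p_0-p)}}^{r/p_0}$ combined with the semigroup estimate $\|v\|_{L^\mu}\le C(1+\sup_s\|u\|_{L^p})$ and an absorption of the form $M(T)\le c(1+M(T)^{r/p_0})$ with $\frac{r}{p_0}<1$; this closing step is absent from your proposal.
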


\begin{remark} \label{Rm}
An explicit lower bound for $\chi_0$ is established in Lemma~\ref{L4}, namely
\[
\chi_0 \ge \sqrt{\frac{2(1+\delta)}{n}},
\]
where
\[
\delta = \frac{L_2 L_4}{2 L_3 L_5},
\]
with 
\begin{align*}
    L_2&=  \frac{n-2}{16}, \qquad L_3= 16^n \left ( \frac{(2n-1)(3n-1)}{2} \right )^{n+1} + (304 n(2n-2+\sqrt{n}) )^{n+1},\\
    L_4 &= \frac{n-2}{8}, \qquad L_5 = \frac{n-1}{4}.
\end{align*}
Moreover, one can verify that
\[
\delta > \frac{(n-2)^2}{256 (n-1)\left( 16^n (3n^2)^{n+1} + (912 n^2)^{n+1} \right)}.
\]

\end{remark}
\begin{remark}
The works \cite{Lankeit_2016, Fujie_2018, Ahn+Kang+Lee} require the convexity of the domain $\Omega$, whereas Theorem~\ref{thm} imposes no such assumption. 
This is due to Lemma~\ref{Lbe}, which enables control of the nonlinear boundary integrals through the diffusion term without relying on the convexity of $\Omega$.

\end{remark}
From a biological perspective, the result of Theorem~\ref{thm} indicates that when chemotactic sensitivity is weak, the combined effects of diffusion and signal degradation are sufficient to regulate cell movement, thereby preventing excessive aggregation in any spatial dimension. This mechanism stands in sharp contrast to classical chemotaxis systems, where diffusion alone cannot balance chemotactic accumulation, often leading to pronounced clustering of the population.

\textbf{Future work. }An important open question is the optimal choice of $(r, \lambda)$ that maximizes the value of $\chi_0$, ensuring that for any $\chi < \chi_0$ the solutions exist globally and remain uniformly bounded in time. Another interesting direction for future research is to explore applications of the boundedness of 
\[
\int_\Omega \frac{u^p}{v^r}+ \int_\Omega \frac{|\nabla v|^{2p}}{v^{p+r}}
\] 
in preventing blow-up for chemotaxis systems with (weak) singular sensitivity and logistic growth, or for chemotaxis-fluid models with singular sensitivity. In such models, as can be seen, a uniform lower bound for $v$ is generally no longer guaranteed due to the presence of logistic sources or fluid interactions. We leave these questions as topics for future investigations.

The structure of the paper is as follows: In Section \ref{S2}, we recall the local well-posedness of and an extensibility property for solutions to \eqref{1} and establish several auxiliary inequalities that will be used in the subsequent analysis. Section \ref{S3} provides several identities and estimates for solutions. In the final section, we prove the time-independent  $L^p$ boundedness of solutions and then complete the proof of the main result.

\section{Preliminaries} \label{S2}

Let us begin this section by recalling the local existence and an extensibility criterion of solutions to the system \eqref{1} as established in \cite{Lankeit_2016}[Theorem 2.3].
\begin{lemma} \label{local}
    Let $n \geq 1$, $\Omega \subset \mathbb{R}^n$ a bounded, smooth domain and $q>n$. Then there exist $T_{\rm max} \in (0,\infty]$ and a unique pair of nonnegative functions $(u,v)$ such that
    \begin{align*}
        u &\in C^0 \left(\bar{\Omega} \times [0,T_{\rm max})\right) \cap C^{2,1} \left(\bar{\Omega} \times (0,T_{\rm max})\right) \quad \text{and }\notag \\
        v &\in C^0 \left(\bar{\Omega} \times [0,T_{\rm max})\right) \cap C^{2,1} \left(\bar{\Omega} \times (0,T_{\rm max})\right)\cap L^\infty_{loc} \left ([0,T_{\rm max}); W^{1,q}(\Omega) \right ),
    \end{align*}
    solving the system \eqref{1} with initial conditions \eqref{initial} and boundary conditions \eqref{bdry}. Moreover, $u>0$ and $v>0$ in $\bar{\Omega} \times (0, T_{\rm max})$, and 
    \begin{align} \label{ext}
        \text{if } T_{\rm max}< \infty, \text{ then } \limsup_{t \to T_{\rm max}} \left \{ \left \| u(\cdot,t) \right \|_{L^\infty(\Omega)}+\left \| v(\cdot,t) \right \|_{W^{1,q}(\Omega)} \right \} = \infty.
    \end{align}
\end{lemma}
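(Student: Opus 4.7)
The plan is to follow the standard local well-posedness theory for parabolic cross-diffusion systems via a Banach fixed-point argument, with the singular sensitivity handled by a short-time lower bound on $v$. Fix $q > n$ and choose $T>0$ small, together with constants $\varepsilon \in (0, \min_{\bar\Omega} v_0)$ and $R > \|u_0\|_{L^\infty} + \|v_0\|_{W^{1,q}}$ to be specified. On the closed set
\[
X_T = \left\{ (u,v) \in C^0(\bar\Omega \times [0,T]) \times L^\infty((0,T); W^{1,q}(\Omega)) : 0 \le u, \ v \ge \varepsilon, \ \|u\|_\infty + \|v\|_{W^{1,q}} \le R \right\},
\]
define a map $\Phi$ by first solving
\[
v_t = \Delta v - v + \tilde u, \qquad \partial_\nu v|_{\partial\Omega} = 0, \qquad v(0)=v_0,
\]
with prescribed $\tilde u$ via the Neumann heat semigroup $v(t) = e^{t(\Delta - 1)} v_0 + \int_0^t e^{(t-s)(\Delta-1)} \tilde u(s)\,ds$, and then solving the linear parabolic problem
\[
u_t = \Delta u - \chi \nabla \cdot \left( \frac{u}{v} \nabla v \right), \qquad \partial_\nu u|_{\partial\Omega} = 0, \qquad u(0)=u_0,
\]
with coefficients built from the previously found $v$.

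The core estimates are the standard smoothing bounds for the Neumann heat semigroup, namely $\|e^{t\Delta}w\|_{L^\infty} \lesssim t^{-n/(2p)} \|w\|_{L^p}$ and $\|\nabla e^{t\Delta}w\|_{L^q} \lesssim t^{-1/2 - n/2 \cdot (1/p - 1/q)} \|w\|_{L^p}$ for $p \le q$. Using these, together with $\min v_0 > 0$ and the representation formula, I would show that $v(x,t) \ge \tfrac{1}{2} e^{-t} \min v_0 \ge \varepsilon$ on $[0,T]$ for $T$ small, which is what keeps the singular term $u/v$ controlled. Parabolic $W^{1,q}$ theory applied to the $u$-equation with bounded drift $\chi \nabla v / v \in L^\infty((0,T); L^q(\Omega))$ then gives $\|u\|_{C^0(\bar\Omega \times [0,T])} \le R$ provided $T$ is chosen small in terms of $R$, $\varepsilon$ and $\chi$. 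The same inputs yield a contraction estimate $\|\Phi(u_1,v_1) - \Phi(u_2,v_2)\|_{X_T} \le \tfrac{1}{2} \|(u_1,v_1)-(u_2,v_2)\|_{X_T}$ after shrinking $T$ further, producing a unique fixed point. Nonnegativity of $u$ follows from the parabolic maximum principle applied to the linear $u$-equation; strict positivity of both components for $t>0$ follows from the strong maximum principle and the Hopf lemma.

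For the extensibility criterion \eqref{ext}, I would run a bootstrap argument. Suppose $T_{\rm max} < \infty$ and the bracketed quantity in \eqref{ext} stays bounded as $t \nearrow T_{\rm max}$. Then $\nabla v / v$ is bounded in $L^\infty((0,T_{\rm max}); L^q(\Omega))$, so Schauder-type interior-up-to-boundary estimates, applied successively to the $u$- and $v$-equations using the Neumann heat semigroup representation and the already known $L^\infty$ bound on $u$, upgrade the regularity of $(u,v)$ to uniform Hölder bounds up to $t = T_{\rm max}$. Restarting the local existence theorem with initial data $(u(\cdot, T_{\rm max} - \delta), v(\cdot, T_{\rm max} - \delta))$ then extends the solution past $T_{\rm max}$, contradicting maximality.

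The main obstacle is the singularity $1/v$: without a positive lower bound on $v$ the drift coefficient in the $u$-equation cannot be controlled, so the contraction argument collapses. Establishing the quantitative short-time lower bound $v \ge \tfrac{1}{2}e^{-t}\min v_0$ via the semigroup representation, and propagating it through both the existence proof and the extensibility bootstrap, is where the positivity assumption on $v_0$ in \eqref{initial} enters essentially.
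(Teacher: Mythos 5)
The paper does not prove this lemma at all: it is quoted verbatim from Lankeit's Theorem 2.3 in \cite{Lankeit_2016}, so there is no in-paper argument to compare against. Your sketch follows the same standard route as that cited proof (Banach fixed point with Neumann heat-semigroup smoothing, the comparison lower bound $v\ge e^{-t}\min_{\bar\Omega}v_0$ to tame the singular drift, maximum principles for positivity, and a restart argument for the extensibility criterion), and it is essentially sound; the only step you gloss over is upgrading the mild fixed-point solution to the stated $C^{2,1}$ classical regularity, which requires an additional parabolic Schauder bootstrap.
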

From now on, we denote $(u,v)$ as a classical solution to \eqref{1} in $(0,T_{\rm max})$. The following lemma gives a uniform in time $L^1$ bound for $v$.
\begin{lemma} \label{L1}
    There exists positive constants $m$ and $C$ such that 
    \begin{align*}
         \int_\Omega u(\cdot,t)=m \qquad \text{for all }t\in (0,T_{\rm max}),
    \end{align*}
    and
    \begin{align*}
     \int_\Omega v(\cdot,t) \leq C\qquad \text{for all }t\in (0,T_{\rm max}).
    \end{align*}
\end{lemma}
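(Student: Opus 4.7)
The plan is to prove both estimates by integrating the corresponding PDEs over $\Omega$ and exploiting the homogeneous Neumann boundary conditions \eqref{bdry}, which make the Laplacian and divergence terms vanish.

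First I would handle the mass conservation of $u$. Integrating the first equation of \eqref{1} over $\Omega$, the divergence theorem combined with the Neumann condition $\partial_\nu u = 0$ and the fact that the advective flux $\frac{u}{v}\nabla v$ has vanishing normal component on $\partial\Omega$ (as $\partial_\nu v = 0$) yields $\frac{d}{dt}\int_\Omega u(\cdot,t) = 0$. Hence $\int_\Omega u(\cdot,t) = \int_\Omega u_0 =: m$ for all $t \in (0,T_{\rm max})$, which is strictly positive by assumption \eqref{initial}.

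Next I would derive the $L^1$ bound for $v$. Integrating the second equation of \eqref{1} and using $\partial_\nu v = 0$ together with the mass conservation just established, I obtain the ordinary differential equation
\begin{align*}
\frac{d}{dt}\int_\Omega v(\cdot,t) = -\int_\Omega v(\cdot,t) + m.
\end{align*}
Solving this linear ODE explicitly gives
\begin{align*}
\int_\Omega v(\cdot,t) = m + \left(\int_\Omega v_0 - m\right) e^{-t},
\end{align*}
from which the uniform bound $\int_\Omega v(\cdot,t) \leq \max\!\left\{\int_\Omega v_0,\, m\right\} =: C$ follows for every $t \in (0,T_{\rm max})$.

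There is no real obstacle here: both statements are standard consequences of testing the equations with the constant function $1$, and the only care required is justifying the vanishing boundary integrals, which is immediate from \eqref{bdry} and the regularity provided by Lemma~\ref{local}. The proof therefore reduces to the two short computations above.
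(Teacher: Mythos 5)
Your proposal is correct and follows essentially the same route as the paper: integrate each equation over $\Omega$, use the Neumann conditions to kill the boundary terms, and deduce the $L^1$ bound for $v$ from the resulting linear ODE (the paper invokes Gronwall's inequality where you solve the ODE explicitly, which is an immaterial difference). No gaps.
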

\begin{proof}
    By integrating the first equation of \eqref{1} over $\Omega$, we obtain that 
    \begin{align*}
        \frac{d}{dt}\int_\Omega u(\cdot,t) =0 \qquad \text{for all }t\in (0,T_{\rm max}),
    \end{align*}
    which entails that 
    \begin{align*}
        \int_\Omega u(\cdot,t)=m:= \int_\Omega u_0 \qquad \text{for all }t\in (0,T_{\rm max}).
    \end{align*}
    Integrating the second equation of \eqref{1} over $\Omega$ yields
    \begin{align*}
        \frac{d}{dt}\int_\Omega v + \int_\Omega v = m  \qquad \text{for all }t\in (0,T_{\rm max}).
    \end{align*}
    Applying Gronwall's inequality to this, it follows that 
    \begin{align*}
        \sup_{t\in (0,T_{\rm max})} \int_\Omega v(\cdot,t) \leq \max \left \{ \int_\Omega v_0, m \right \},
    \end{align*}
    which completes the proof.
\end{proof}

The next lemma gives us a time independent lower bound for $v$, which was established in \cite{Fujie_2018}[Lemma 3.1].
\begin{lemma} \label{low}
    There exists a positive constant $\eta$ depending only on $\inf_{\bar{\Omega}} v_0, \int_\Omega u_0$, and $\Omega$ such that 
    \begin{align*}
        \inf_{x\in \Omega} v(x,t) \geq \eta \qquad \text{for all }t\in (0,T_{\rm max}). 
    \end{align*}
\end{lemma}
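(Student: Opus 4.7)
The plan is to use the Duhamel representation for the $v$-equation. Since $v$ solves $v_t = \Delta v - v + u$ under Neumann data, writing $(e^{\tau\Delta})_{\tau\ge 0}$ for the Neumann heat semigroup on $\Omega$ gives
\begin{equation*}
    v(\cdot,t) = e^{-t} e^{t\Delta} v_0 + \int_0^t e^{-(t-s)} e^{(t-s)\Delta} u(\cdot,s)\, ds,
\end{equation*}
and both summands on the right are nonnegative because $v_0\ge 0$ and $u\ge 0$. The strategy is to bound one summand from below on each of two complementary time regimes and take the minimum.

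On a fixed short-time window $[0,T_0]$ I would discard the integral term and invoke the parabolic maximum principle for the pure Neumann heat equation, which yields $e^{t\Delta}v_0\ge \inf_{\bar{\Omega}}v_0$ pointwise, hence $v(x,t)\ge e^{-T_0}\inf_{\bar{\Omega}}v_0$ for all $t\in[0,T_0]$. For $t\ge T_0$ I would instead exploit Lemma~\ref{L1} together with the standard pointwise lower bound on the Neumann heat kernel $k_\tau(x,y)$: because the spectral gap of $-\Delta$ with Neumann data forces $k_\tau(x,y)\to|\Omega|^{-1}$ uniformly as $\tau\to\infty$, there exist $0<\tau_1<\tau_2$ and $c_0=c_0(\Omega)>0$ such that $k_\tau(x,y)\ge c_0$ for every $x,y\in\bar{\Omega}$ and every $\tau\in[\tau_1,\tau_2]$. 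Restricting the Duhamel integral to $s\in[t-\tau_2,t-\tau_1]$ and using the mass identity $\int_\Omega u(\cdot,s)=m$ from Lemma~\ref{L1} then produces
\begin{equation*}
    v(x,t)\ge c_0\int_{t-\tau_2}^{t-\tau_1} e^{-(t-s)} \int_\Omega u(y,s)\,dy\,ds = c_0 m \bigl(e^{-\tau_1}-e^{-\tau_2}\bigr)
\end{equation*}
whenever $t\ge\tau_2$, a positive constant depending only on $m$ and $\Omega$. Setting $T_0:=\tau_2$ and letting $\eta$ be the minimum of the two lower bounds yields the claim with the asserted dependence on $\inf_{\bar{\Omega}}v_0$, $\int_\Omega u_0$, and $\Omega$.

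The only nonroutine input is the kernel estimate $k_\tau\ge c_0$ on the intermediate window $[\tau_1,\tau_2]$, and this is the step I expect to require the most care, although it is a classical fact that follows either from the eigenfunction expansion of the Neumann Laplacian or from standard Gaussian-type bounds on smooth bounded domains. Since the excerpt attributes the statement to \cite{Fujie_2018}[Lemma~3.1], the cleanest presentation in the paper is to quote that reference directly; the outline above makes transparent why the constant $\eta$ depends only on the data indicated.
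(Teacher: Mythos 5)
The paper offers no proof of this lemma; it simply quotes \cite{Fujie_2018}[Lemma 3.1]. Your argument is correct and is essentially the standard proof of that cited result: the Duhamel representation for $v$, the maximum principle on a short initial window, and the uniform positivity of the Neumann heat kernel on a compact intermediate time interval combined with the mass conservation of Lemma~\ref{L1}, with the two regimes glued at $T_0=\tau_2$. The constant you obtain depends only on $\inf_{\bar{\Omega}}v_0$, $\int_\Omega u_0$, and $\Omega$, exactly as the statement requires.
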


The following lemma establishes regularity properties of \( v \) based on the boundedness of \( u \); for a detailed proof, we refer the reader to \cite{Winkler+2011}[Lemma 2.4].
\begin{lemma} \label{v}
    Let $T \leq  T_{\rm max}$ and $T < \infty$, and $1 \leq \theta, \mu \leq \infty$.
\begin{enumerate}[label=(\roman*)]
\item If $\frac{n}{2}\left( \frac{1}{\theta}- \frac{1}{\mu} \right )<1$, then there exists $C>0$ independent of $T$ such that  
\begin{align*}
    \left \| v(\cdot,t) \right \|_{L^\mu(\Omega) } \leq C\left ( 1+ \sup_{s\in (0,T)} \left \|u(\cdot,s) \right \|_{L^\theta(\Omega)} \right )\quad \text{for all }t\in (0,T).
\end{align*}
\item If $\frac{1}{2}+\frac{n}{2}\left( \frac{1}{\theta}- \frac{1}{\mu} \right )<1$, then there exists $C>0$ independent of $T$ such that  
\begin{align*}
    \left \|\nabla  v(\cdot,t) \right \|_{L^\mu(\Omega) } \leq C\left ( 1+ \sup_{s\in (0,T)} \left \|u(\cdot,s) \right \|_{L^\theta(\Omega)} \right )\quad \text{for all }t\in (0,T).
\end{align*}
\end{enumerate}
\end{lemma}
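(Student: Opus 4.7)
The plan is to apply Duhamel's formula to the second equation of \eqref{1}, writing
\[
v(\cdot,t) = e^{t(\Delta - 1)} v_0 + \int_0^t e^{(t-s)(\Delta - 1)} u(\cdot,s)\, ds,
\]
where $(e^{\tau \Delta})_{\tau \geq 0}$ denotes the Neumann heat semigroup on $\Omega$ and the relation $e^{\tau(\Delta - 1)} = e^{-\tau} e^{\tau \Delta}$ is used. The exponential factor $e^{-\tau}$ originating from the $-v$ absorption term is crucial for obtaining estimates that are uniform in $T$, since it controls the integral of the smoothing kernel at large times without relying on any decay of the Neumann heat kernel itself.

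For part (i), I would invoke the standard $L^\theta$--$L^\mu$ smoothing estimate for the Neumann heat semigroup, which gives a constant $C = C(\Omega,\theta,\mu) > 0$ such that
\[
\bigl\|e^{\tau(\Delta - 1)} f\bigr\|_{L^\mu(\Omega)} \leq C \bigl(1 + \tau^{-\frac{n}{2}(\frac{1}{\theta} - \frac{1}{\mu})}\bigr)\, e^{-\tau}\, \|f\|_{L^\theta(\Omega)}
\]
for all $\tau > 0$ and all $f \in L^\theta(\Omega)$. Applying this term-by-term in Duhamel's formula and setting $M := \sup_{s \in (0,T)} \|u(\cdot,s)\|_{L^\theta(\Omega)}$, I obtain
\[
\|v(\cdot,t)\|_{L^\mu(\Omega)} \leq C e^{-t} \|v_0\|_{L^\mu(\Omega)} + C\, M \int_0^t \bigl(1 + (t-s)^{-\frac{n}{2}(\frac{1}{\theta} - \frac{1}{\mu})}\bigr) e^{-(t-s)}\, ds.
\]
After the substitution $\tau = t - s$, the temporal integral is majorized by $\int_0^\infty (1 + \tau^{-\alpha}) e^{-\tau}\, d\tau$ with $\alpha = \tfrac{n}{2}(\tfrac{1}{\theta} - \tfrac{1}{\mu})$, which is finite precisely when $\alpha < 1$, i.e.\ exactly under the hypothesis of (i). Combined with $\|v_0\|_{L^\mu(\Omega)} \leq \|v_0\|_{L^\infty(\Omega)} |\Omega|^{1/\mu}$, this yields the desired bound.

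For part (ii), the argument is identical in structure but uses the gradient smoothing estimate
\[
\bigl\|\nabla e^{\tau(\Delta - 1)} f\bigr\|_{L^\mu(\Omega)} \leq C \bigl(1 + \tau^{-\frac{1}{2} - \frac{n}{2}(\frac{1}{\theta} - \frac{1}{\mu})}\bigr)\, e^{-\tau}\, \|f\|_{L^\theta(\Omega)},
\]
the extra $\tau^{-1/2}$ factor reflecting the standard one-derivative loss. Proceeding as above, the temporal integral converges provided that $\tfrac{1}{2} + \tfrac{n}{2}(\tfrac{1}{\theta} - \tfrac{1}{\mu}) < 1$, which is the hypothesis of (ii). The initial-data contribution is controlled through $\|\nabla v_0\|_{L^\mu(\Omega)} \leq \|\nabla v_0\|_{L^\infty(\Omega)} |\Omega|^{1/\mu}$, using $v_0 \in W^{1,\infty}(\Omega)$.

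The main technical point is the justification of the two smoothing estimates above, especially the gradient version near $\tau = 0^+$, together with verifying that the constants are independent of $T$; this is by now classical in the chemotaxis literature (cf.\ \cite{Winkler+2011}), and once granted, the remainder of the argument reduces to the elementary convergence of the gamma-type integrals in $\tau$ described above.
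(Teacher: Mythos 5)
The paper gives no proof of this lemma, instead citing \cite{Winkler+2011}[Lemma 2.4], whose argument is precisely the Duhamel-plus-semigroup-smoothing computation you describe; your proposal is correct and follows essentially the same route, with the key observation (that the $e^{-\tau}$ factor from the $-v$ absorption term makes the gamma-type integrals converge uniformly in $T$ under the stated exponent conditions) correctly identified.
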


Now we will derive several useful inequalities similar to \cite{Winkler2012}[Lemma 3.3] and \cite{Winkler+2022}[Lemma 3.4], which will be later applied in Lemma \ref{L2'}. 

     \begin{lemma} \label{Lw2}
     Let $l>0$ and $q > l+1$. For any $\phi \in C^2(\bar{ \Omega  })$ such that $\phi>0$ in $\bar{\Omega}$ and $\frac{\partial \phi}{\partial \nu}=0$ on $\partial \Omega$, the following inequalities hold
        \begin{align} \label{Lm.1}
             \int_\Omega \phi^{-l-2}|\nabla \phi|^{q+2} \leq \left ( \frac{q+\sqrt{n}}{l+1} \right )^2 \int_\Omega \phi^{-l}|\nabla \phi|^{q-2} |D^2 \phi|^2,
        \end{align}
        and
    \begin{align} \label{Lm.2}
        \int_\Omega \phi^{-l-2}|\nabla \phi|^{q+2} \leq \left ( 4\left ( \frac{l+1+\sqrt{n}}{q-l+1} \right )^2+2 \right )\int_\Omega \phi^{-l+2}|\nabla \phi|^{q-2}|D^2 \ln \phi|^2,
    \end{align}
    as well as
    \begin{align} \label{Lm.3}
        \int_\Omega \phi^{-l}|\nabla \phi|^{q-2} |D^2 \phi|^2 \leq \left ( \frac{q-l+1}{q-l-1} \cdot \left ( \frac{l+1+\sqrt{n}}{q-l+1} \right )^2+1 \right )\int_\Omega \phi^{-l+2}|\nabla \phi|^{q-2}|D^2 \ln \phi|^2.
    \end{align}
    \end{lemma}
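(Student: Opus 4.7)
The plan is to derive all three inequalities from a single integration-by-parts identity combined with pointwise algebraic relations between $D^2\phi$, $D^2\ln\phi$, and $|\nabla\phi|$. Using $\phi^{-l-2}\nabla\phi = -\frac{1}{l+1}\nabla(\phi^{-l-1})$ and integrating by parts, with the Neumann condition $\partial_\nu\phi=0$ annihilating all boundary terms, I would first obtain the master identity
\begin{align*}
(l+1)\int_\Omega \phi^{-l-2}|\nabla\phi|^{q+2} = \int_\Omega \phi^{-l-1}|\nabla\phi|^q\Delta\phi + q\int_\Omega \phi^{-l-1}|\nabla\phi|^{q-2}D^2\phi(\nabla\phi,\nabla\phi).
\end{align*}
For \eqref{Lm.1}, the pointwise estimates $|\Delta\phi|\leq\sqrt{n}\,|D^2\phi|$ and $|D^2\phi(\nabla\phi,\nabla\phi)|\leq|\nabla\phi|^2|D^2\phi|$ reduce the right-hand side to $(q+\sqrt n)\int_\Omega \phi^{-l-1}|\nabla\phi|^q|D^2\phi|$, and a single application of Cauchy--Schwarz against the splitting $\phi^{-l-1}|\nabla\phi|^q|D^2\phi|=(\phi^{-l-2}|\nabla\phi|^{q+2})^{1/2}(\phi^{-l}|\nabla\phi|^{q-2}|D^2\phi|^2)^{1/2}$ yields \eqref{Lm.1}.

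For \eqref{Lm.2}, I would substitute the pointwise identities $\Delta\phi=\phi\Delta\ln\phi+|\nabla\phi|^2/\phi$ and $D^2\phi(\nabla\phi,\nabla\phi)=\phi\,D^2\ln\phi(\nabla\phi,\nabla\phi)+|\nabla\phi|^4/\phi$ into the master identity. The $|\nabla\phi|^2/\phi$ and $|\nabla\phi|^4/\phi$ contributions each create a copy of $\int_\Omega\phi^{-l-2}|\nabla\phi|^{q+2}$ on the right, and collecting them on the left, together with the assumption $q>l+1$ ensuring positivity, produces
\begin{align*}
(q-l)\int_\Omega\phi^{-l-2}|\nabla\phi|^{q+2} = -\int_\Omega \phi^{-l}|\nabla\phi|^q\Delta\ln\phi - q\int_\Omega \phi^{-l}|\nabla\phi|^{q-2}D^2\ln\phi(\nabla\phi,\nabla\phi).
\end{align*}
Applying $|\Delta\ln\phi|\leq\sqrt n\,|D^2\ln\phi|$ and the Cauchy--Schwarz pointwise bound for $D^2\ln\phi(\nabla\phi,\nabla\phi)$, followed by Young's inequality with a parameter tuned to absorb residual contributions, yields \eqref{Lm.2}. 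For \eqref{Lm.3}, squaring the pointwise identity $D^2\phi=\phi D^2\ln\phi+\phi\nabla\ln\phi\otimes\nabla\ln\phi$ gives
\begin{align*}
|D^2\phi|^2 = \phi^2|D^2\ln\phi|^2 + 2\,D^2\ln\phi(\nabla\phi,\nabla\phi) + \frac{|\nabla\phi|^4}{\phi^2};
\end{align*}
weighting by $\phi^{-l}|\nabla\phi|^{q-2}$ and integrating splits the left-hand side of \eqref{Lm.3} into $\int_\Omega \phi^{-l+2}|\nabla\phi|^{q-2}|D^2\ln\phi|^2$, a cross term controlled by Cauchy--Schwarz, and $\int_\Omega \phi^{-l-2}|\nabla\phi|^{q+2}$, which I would then bound using \eqref{Lm.2}.

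The main technical difficulty is matching the precise constants stated in \eqref{Lm.2} and \eqref{Lm.3}. In particular, the ratio $(l+1+\sqrt n)/(q-l+1)$ appearing there contains $q-l+1$ rather than the $q-l$ that arises most directly from the displayed calculation, and \eqref{Lm.3} features the denominator $q-l-1$; obtaining these sharper forms will require a careful joint optimization of the Young's inequality parameters used across the two bounds, exploiting the strict inequality $q > l+1$ quantitatively rather than merely to ensure positivity, and coordinating the absorption in the proof of \eqref{Lm.2} with the remainder estimate that feeds into \eqref{Lm.3}.
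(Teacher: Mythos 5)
Your treatment of \eqref{Lm.1} coincides with the paper's argument, and your route to \eqref{Lm.2} is sound and in fact more direct than the paper's: the identity $(q-l)\int_\Omega\phi^{-l-2}|\nabla\phi|^{q+2}=-\int_\Omega\phi^{-l}|\nabla\phi|^q\Delta\ln\phi-q\int_\Omega\phi^{-l}|\nabla\phi|^{q-2}D^2\ln\phi(\nabla\phi,\nabla\phi)$ followed by Cauchy--Schwarz gives \eqref{Lm.2} with the constant $\left(\frac{q+\sqrt n}{q-l}\right)^2$, and this never exceeds the stated $4\left(\frac{l+1+\sqrt n}{q-l+1}\right)^2+2$ (write $a=l+\sqrt n>1$, $s=q-l>1$; if $s\le a$ use $(s+a)(s+1)-2s(a+1)=(s-1)(s-a)\le 0$, while if $s>a$ use $t:=a/s<\frac{a+1}{s+1}$ together with $(1+t)^2-2\le 4t^2$, which is $3t^2-2t+1\ge 0$). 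You flagged this comparison as an open difficulty rather than carrying it out, but it does close, so \eqref{Lm.1} and \eqref{Lm.2} are fine.

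The genuine gap is in \eqref{Lm.3}. Your plan --- expand $|D^2\phi|^2=\phi^2|D^2\ln\phi|^2+2D^2\ln\phi(\nabla\phi,\nabla\phi)+|\nabla\phi|^4/\phi^2$, bound the cross term by Cauchy--Schwarz, and feed in \eqref{Lm.2} --- yields $B\le(\sqrt A+\sqrt C)^2\le(\sqrt K+1)^2C$, where $A=\int_\Omega\phi^{-l-2}|\nabla\phi|^{q+2}$, $B=\int_\Omega\phi^{-l}|\nabla\phi|^{q-2}|D^2\phi|^2$, $C=\int_\Omega\phi^{-l+2}|\nabla\phi|^{q-2}|D^2\ln\phi|^2$ and $K=\left(\frac{q+\sqrt n}{q-l}\right)^2>1$; hence your constant always exceeds $4$, whereas the stated constant $\frac{q-l+1}{q-l-1}\cdot\left(\frac{l+1+\sqrt n}{q-l+1}\right)^2+1=\frac{(l+1+\sqrt n)^2}{(q-l)^2-1}+1$ tends to $1$ as $q-l\to\infty$. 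No tuning of Young parameters can repair this, because the loss is structural: taking absolute values of the cross term discards its sign. The paper's missing ingredient is a \emph{second} integration by parts, starting from $A=\int_\Omega|\nabla\ln\phi|^{l+1}|\nabla\phi|^{q-l-1}\nabla\ln\phi\cdot\nabla\phi$, which produces $A\le(l+1+\sqrt n)\sqrt{AC}-(q-l-1)D$ with the cross term $D:=\int_\Omega\phi^{-l-1}|\nabla\phi|^{q-2}\nabla\phi\cdot D^2\phi\cdot\nabla\phi$ entering linearly and with a favorable sign; eliminating $D$ through the pointwise identity $B=C+2D-A$ then places $B$ on the left with the positive weight $\frac{q-l-1}{q-l+1}$, and one application of Young's inequality that cancels $A$ gives \eqref{Lm.3} with the stated constant (dropping the $B$-term from the same combined inequality is how the paper obtains \eqref{Lm.2}). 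You need this signed relation, or some substitute for it, before \eqref{Lm.3} can be claimed.
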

\begin{proof}
    Setting 
    \begin{align*}
        A&=  \int_\Omega \phi^{-l-2} |\nabla\phi|^{q+2}, \\
        B&=\int_\Omega \phi^{-l}|\nabla \phi|^{q-2} |D^2 \phi|^2,\\
        C&=  \int_\Omega \phi^{-l+2}|\nabla \phi|^{q-2}|D^2 \ln \phi|^2, \\
        D&=\int_\Omega  \phi^{-l-1}|\nabla \phi|^{q-2} \nabla \phi \cdot D^2 \phi \cdot \nabla \phi. 
    \end{align*}
    and employing integration by parts yields 
    \begin{align*}
       A &=- \int_\Omega \nabla \cdot \left ( \phi^{-l-2} |\nabla \phi|^q \nabla \phi \right ) \phi \notag \\
        &= (l+2)  \int_\Omega \phi^{-l-2} |\nabla\phi|^{q+2} - \int_\Omega \phi^{-l-1} \nabla |\nabla \phi|^q \cdot \nabla \phi - \int_\Omega \phi^{-l-1}|\nabla \phi|^q \Delta \phi. 
    \end{align*}
    By using the identity $\nabla |\nabla \phi|^2 = 2 D^2\phi \cdot \nabla \phi$ and the inequality $|\Delta \phi| \leq \sqrt{n} |D^2 \phi|$, we obtain that 
    \begin{align*}
        (l+1)A &= q \int_\Omega \phi^{-l-1}|\nabla \phi|^{q-2} \nabla \phi \cdot D^2 \phi \cdot \nabla \phi + \int_\Omega \phi^{-l-1}|\nabla \phi|^q \Delta \phi \notag \\
        &\leq (q+ \sqrt{ n})\int_\Omega \phi^{-l-1}|\nabla \phi|^q |D^2 \phi|  \notag \\
        &\leq (q+ \sqrt{ n}) \sqrt{AB},
    \end{align*}
    which further implies \eqref{Lm.1}. We use integration by parts and Hölder's inequality to derive \eqref{Lm.2} 
    \begin{align} \label{M.1}
        A&= \int_\Omega |\nabla \ln \phi|^{l+1} |\nabla \phi|^{q-l-1}\nabla \ln \phi \cdot \nabla \phi  \notag \\
        &=- \int_\Omega \nabla |\nabla \ln \phi|^{l+1} \cdot \nabla \ln \phi |\nabla \phi|^{q-l-1}\phi- \int_\Omega \phi |\nabla \ln \phi|^{l+1}|\nabla \phi|^{q-l-1}\Delta \ln \phi  \notag \\
        &\quad-\int_\Omega  \phi |\nabla \ln \phi|^{l+1}\nabla \ln \phi \cdot \nabla |\nabla \phi|^{q-l-1} \notag \\
        &= -(l+1)\int_\Omega  \phi^{-l}|\nabla \phi|^{q-2} \nabla \phi \cdot D^2 \ln \phi \cdot \nabla \phi - \int_\Omega \phi^{-l}|\nabla \phi|^{q} \Delta \ln \phi \notag \\
        &\quad-(q-l-1) \int_\Omega  \phi^{-l-1}|\nabla \phi|^{q-2} \nabla \phi \cdot D^2 \phi \cdot \nabla \phi \notag \\
        &\leq (l+1+\sqrt{n}) \int_\Omega \phi^{-l}|\nabla \phi|^q |D^2 \ln \phi| -(q-l-1)D \notag \\
        &\leq (l+1+\sqrt{n}) \sqrt{AC} -(q-l-1)D. 
    \end{align}
    Employing the identity 
    \begin{align*}
        |D^2 \phi|^2 = \phi^2 |D^2 \ln \phi|^2 + \frac{2}{\phi} \nabla \phi \cdot \left ( D^2 \phi \cdot \nabla \phi \right ) - \frac{|\nabla \phi|^4}{\phi^2},
    \end{align*}
    we obtain 
    \begin{align} \label{M.2}
        B=C+2D-A.
    \end{align}
    Combining \eqref{M.1} and \eqref{M.2} with straightforward simplifications, we deduce that
    \begin{align} \label{M.3}
        A+\frac{q-l-1}{q-l+1}B \leq \frac{2(l+1+\sqrt{n})}{q-l+1} \sqrt{AC} + \frac{q-l-1}{q-l+1}C.
    \end{align}
    Dropping the second term of the {left} hand side and using Young's inequality, we derive 
    \begin{align*}
        A \leq \frac{A}{2} +2 \left ( \frac{l+1+\sqrt{n}}{q-l+1} \right )^2 C + C.
    \end{align*}
   This leads to 
    \begin{align}
        A \leq \left ( 4\left ( \frac{l+1+\sqrt{n}}{q-l+1} \right )^2+2 \right )C,
    \end{align}
    which proves \eqref{Lm.2}. To prove \eqref{Lm.3}, we apply Young's inequality to \eqref{M.3} to obtain that
    \begin{align*}
        A+\frac{q-l-1}{q-l+1}B \leq A + \left ( \frac{l+1+\sqrt{n}}{q-l+1} \right )^2C + \frac{q-l-1}{q-l+1}C.
    \end{align*}
    This further leads to 
    \begin{align*}
        B \leq \left ( \frac{q-l+1}{q-l-1} \cdot \left ( \frac{l+1+\sqrt{n}}{q-l+1} \right )^2+1 \right )C,
    \end{align*}
    which proves \eqref{Lm.3}. The proof is now complete.
    
\end{proof} 
The following lemma provides a bound for the boundary integral that arises in the analysis of the quantity \( \frac{d}{dt} \int_\Omega \frac{|\nabla v|^{2p}}{v^{p+r}} \), as encountered in Lemma~\ref{L2}. This allows us to work on a general domain $\Omega$ without the convexity assumption.

\begin{lemma} \label{Lbe}
    Let $p\geq 1$, $0<r<p$, and $\eta>0$. Then there exists $C=C(\eta,p,r,n,\Omega)>0$ such that the following holds
    \begin{align} \label{Lbe-1}
        \int_{\partial \Omega} \phi^{-p-r}|\nabla \phi|^{2p-2} \cdot \frac{\partial |\nabla \phi|^2}{\partial \nu }\leq \eta \int_\Omega \phi^{-p-r+2}|\nabla \phi|^{2p-2} |D^2 \ln\phi|^2+C \int_\Omega \phi^{p-r},
    \end{align}
    for any $\phi \in C^2(\bar{\Omega})$ which is such that $\phi>0$ in $\bar{\Omega}$ and $\frac{\partial \phi}{ \partial \nu}=0$ on $\partial \Omega$.
\end{lemma}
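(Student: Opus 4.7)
The plan is to convert the boundary integral into interior quantities that can be absorbed into the two terms on the right of \eqref{Lbe-1}.

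First, since $\partial\phi/\partial\nu = 0$ on $\partial\Omega$, the gradient $\nabla\phi$ is tangential there, and a standard differential-geometric identity involving the second fundamental form of $\partial\Omega$ produces the pointwise bound
\begin{align*}
\frac{\partial|\nabla\phi|^2}{\partial\nu} \leq 2\kappa_\Omega |\nabla\phi|^2 \qquad \text{on }\partial\Omega,
\end{align*}
where $\kappa_\Omega>0$ controls the principal curvatures of $\partial\Omega$ (finite since $\partial\Omega$ is smooth). Multiplying by the nonnegative factor $\phi^{-p-r}|\nabla\phi|^{2p-2}$ reduces the LHS of \eqref{Lbe-1} to a multiple of $\int_{\partial\Omega}\phi^{-p-r}|\nabla\phi|^{2p}$.

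Next, I would trade this surface integral for interior ones via the divergence theorem. Choosing a smooth extension $\vec{\xi}\in C^1(\bar\Omega;\mathbb{R}^n)$ of the outer unit normal $\nu$, the identity
\begin{align*}
\int_{\partial\Omega}\phi^{-p-r}|\nabla\phi|^{2p} = \int_\Omega \nabla\cdot\left(\phi^{-p-r}|\nabla\phi|^{2p}\vec{\xi}\right)
\end{align*}
expands into three interior integrals bounded, up to constants depending only on $p$, $r$, and $\Omega$ (through $\|\vec{\xi}\|_{L^\infty}$ and $\|\nabla\cdot\vec{\xi}\|_{L^\infty}$), by
\begin{align*}
I_1 := \int_\Omega \phi^{-p-r-1}|\nabla\phi|^{2p+1},\quad I_2 := \int_\Omega \phi^{-p-r}|\nabla\phi|^{2p-1}|D^2\phi|,\quad I_3 := \int_\Omega \phi^{-p-r}|\nabla\phi|^{2p}.
\end{align*}

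Third, each of $I_1, I_2, I_3$ must be dominated by a small multiple of $\int_\Omega \phi^{-p-r+2}|\nabla\phi|^{2p-2}|D^2\ln\phi|^2$ plus a large multiple of $\int_\Omega \phi^{p-r}$. For $I_1$, Young's inequality with conjugate exponents $\big((2p+2)/(2p+1),\,2p+2\big)$ yields the factorization $\phi^{-p-r-1}|\nabla\phi|^{2p+1} = (\phi^{-p-r-2}|\nabla\phi|^{2p+2})^{(2p+1)/(2p+2)}\cdot\phi^{(p-r)/(2p+2)}$, giving $I_1\leq \varepsilon J + C_\varepsilon\int_\Omega\phi^{p-r}$ where $J:=\int_\Omega\phi^{-p-r-2}|\nabla\phi|^{2p+2}$; an analogous split with exponents $\big((p+1)/p,\,p+1\big)$ handles $I_3$. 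For $I_2$, the identity $D^2\ln\phi = D^2\phi/\phi - \nabla\phi\otimes\nabla\phi/\phi^2$ furnishes $|D^2\phi|\leq \phi|D^2\ln\phi| + \phi^{-1}|\nabla\phi|^2$, so Cauchy--Schwarz followed by the weighted $ab$-inequality gives $I_2 \leq \varepsilon\int_\Omega\phi^{-p-r+2}|\nabla\phi|^{2p-2}|D^2\ln\phi|^2 + C_\varepsilon I_3 + I_1$, reducing it to already-handled pieces. Finally, the auxiliary quantity $J$ is controlled by $\int_\Omega\phi^{-p-r+2}|\nabla\phi|^{2p-2}|D^2\ln\phi|^2$ through inequality \eqref{Lm.2} of Lemma~\ref{Lw2}, applied with $l=p+r-2$ and $q=2p-2$ (the hypothesis $q>l+1$ reads $p>r+1$, which is automatic in the intended application $r=(p-1)/2$, $p>1$). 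Choosing all intermediate $\varepsilon$'s sufficiently small relative to $\eta$ yields \eqref{Lbe-1} with $C = C(\eta,p,r,n,\Omega)$.

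The main obstacle is coefficient bookkeeping: one must verify that every Young split funnels its error only into $\int_\Omega\phi^{p-r}$ or into $J$, and that the compounded prefactor of $\int_\Omega \phi^{-p-r+2}|\nabla\phi|^{2p-2}|D^2\ln\phi|^2$ remains adjustable below any prescribed $\eta$. A secondary issue is the parameter restriction $p > r+1$ needed to invoke Lemma~\ref{Lw2}, which is innocuous in the application of interest.
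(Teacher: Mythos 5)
Your argument follows essentially the same route as the paper: the curvature bound for $\partial|\nabla\phi|^2/\partial\nu$ on the boundary, conversion of the boundary term into interior integrals (your divergence-theorem device with an extended normal field is exactly the proof of the $W^{1,1}(\Omega)\hookrightarrow L^1(\partial\Omega)$ trace inequality the paper cites), Young splits funneling everything into $\int_\Omega\phi^{p-r}$ and $\int_\Omega\phi^{-p-r-2}|\nabla\phi|^{2p+2}$, and Lemma~\ref{Lw2} to close. One small correction: to control $J$ you must apply \eqref{Lm.2} with $l=p+r$ and $q=2p$ (not $l=p+r-2$, $q=2p-2$, which would bound $\int_\Omega\phi^{-p-r}|\nabla\phi|^{2p}$ instead); this yields the same admissibility condition $p>r+1$, which you correctly flag as harmless for $r=(p-1)/2$.
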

\begin{proof}
    Thanks to \cite{Souplet-13}[Lemma 4.2] and Trace Sobolev embedding from $W^{1,1}(\Omega)$ into $L^1(\partial \Omega)$, one can find $c_1=c_1(\Omega)>0$ and $c_2>0$ such that 
    \begin{align*}
        \frac{\partial |\nabla \phi|^2}{\partial \nu } \leq c_1 |\nabla \phi|^2 \quad \text{on }\partial \Omega, 
    \end{align*}
    and 
    \begin{align*}
        \int_{\partial \Omega} |w| \leq c_2 \int_\Omega |\nabla w| +c_2 \int_\Omega |w| \quad \text{for all } w \in C^{1}(\bar{\Omega}).
    \end{align*}
    Therefore, we obtain that
    \begin{align} \label{Lbe.1}
           \int_{\partial \Omega} \phi^{-p-r}|\nabla \phi|^{2p-2} \cdot \frac{\partial |\nabla \phi|^2}{\partial \nu }&\leq c_1 \int_{\partial \Omega} \phi^{-p-r}|\nabla \phi|^{2p} \notag \\
           &\leq c_1c_2 \int_\Omega \left | {\nabla}  \left ( \phi^{-p-r}|\nabla \phi|^{2p} \right ) \right |+c_1c_2 \int_\Omega \phi^{-p-r}|\nabla \phi|^{2p}.
    \end{align}
    Using the identity $\nabla |\nabla \phi|^2= 2 D^2 \phi \cdot \nabla \phi$ and Young's inequality, it follows that
    \begin{align} \label{Lbe.2}
        c_1c_2 \int_\Omega \left | \nabla \left ( \phi^{-p-r}|\nabla \phi|^{2p} \right ) \right | &\leq 2pc_1c_2 \int_\Omega \phi^{-p-r} |\nabla \phi|^{2p-1} |D^2\phi| +c_1c_2(p+r) \int_\Omega \phi^{-p-r-1}|\nabla \phi|^{2p+1} \notag \\
        &\leq \epsilon \int_\Omega \phi^{-p-r}|\nabla \phi|^{2p-2} |D^2 \phi|^2 + \frac{p^2c_1^2c_2^2}{\epsilon} \int_\Omega \phi^{-p-r}|\nabla \phi|^{2p} \notag \\
        &\quad +c_1c_2(p+r) \int_\Omega \phi^{-p-r-1}|\nabla \phi|^{2p+1}.
    \end{align}
    Abbreviating $c_3:= c_1c_2+ \frac{p^2c_1^2c_2^2}{\epsilon} $ and $c_4:= c_1c_2(p+r)$, and applying Young's inequality with $\epsilon>0$, we derive that
    \begin{align} \label{Lbe.3}
        c_3 \int_\Omega \phi^{-p-r}|\nabla \phi|^{2p} &= \int_\Omega \left \{ \frac{\epsilon}{2} \phi^{-p-r-2} |\nabla \phi|^{2p+2} \right \}^{\frac{p}{p+1}} \cdot \left \{ \left ( \frac{2}{\epsilon}  \right )^{\frac{p}{p+1}}c_3 \phi^{\frac{p-r}{p+1}} \right \} \notag \\
        &\leq \frac{\epsilon}{2} \int_\Omega \phi^{-p-r-2}|\nabla \phi|^{2p+2} +\frac{2^p}{\epsilon^p}c_3^{p+1}\int_\Omega \phi^{p-r},
    \end{align}
    and 
    \begin{align} \label{Lbe.4}
        c_4\int_\Omega \phi^{-p-r-1}|\nabla \phi|^{2p+1} &= \int_\Omega \left \{ \frac{\epsilon}{2} \phi^{-p-r-2}|\nabla \phi|^{2p+2} \right \}^{\frac{2p+1}{2p+2}} \cdot \left \{ \left (\frac{2}{\epsilon} \right )^{\frac{2p+1}{2p+2}} c_4 \phi^{\frac{p-r}{2p+2}}\right \} \notag \\
        &\leq  \frac{\epsilon}{2} \int_\Omega \phi^{-p-r-2}|\nabla \phi|^{2p+2} + \left ( \frac{2}{\epsilon} \right )^{2p+1}c_4^{2p+2} \int_\Omega \phi^{p-r}.
    \end{align}
    Combining estimates from \eqref{Lbe.1} to \eqref{Lbe.4} yields
      \begin{align} \label{Lbe.5}
        \int_{\partial \Omega} \phi^{-p-r}|\nabla \phi|^{2p-2} \cdot \frac{\partial |\nabla \phi|^2}{\partial \nu }\leq \epsilon \int_\Omega \phi^{-p-r}|\nabla \phi|^{2p-2} |D^2 \phi|^2+ \epsilon \int_\Omega \phi^{-p-r-2}|\nabla \phi|^{2p+2} +c_5 \int_\Omega \phi^{p-r},
    \end{align}
    where $c_5= \frac{2^p}{\epsilon^p}c_3+\left ( \frac{2}{\epsilon} \right )^{2p+1}c_4^{2p+2}$. Thanks to {\eqref{Lm.2} and \eqref{Lm.3}}, one can find $c_6=c_6(p,r,n)>0$ such that 
    \begin{align*}
        \int_\Omega \phi^{-p-r}|\nabla \phi|^{2p-2} |D^2 \phi|^2+ \int_\Omega \phi^{-p-r-2}|\nabla \phi|^{2p+2} \leq c_6 \int_\Omega \phi^{-p-r+2}|\nabla \phi|^{2p-2} |D^2 \ln \phi|^2.
    \end{align*}
    Choosing $\epsilon = \frac{\eta}{c_6}$ and substituting this into \eqref{Lbe.5} proves \eqref{Lbe-1}, thereby completing the proof.
    
\end{proof}

\section{A priori estimates}\label{S3}

In this section, we establish several estimates for solutions to \eqref{1}, which will be later applied to derive the most essential estimate, Lemma \ref{L4}, allowing us to obtain {a uniform in time $L^{p}$ bound with some $p> \frac{n}{2}$ for $u$}. Let us commence with the following lemma, which enables us to control the quantity $\int_\Omega v^{p-r}$.

\begin{lemma} \label{Lp-v}
    For any $p>1$, $0<r<p-1$, $M>0$, $\chi>0$ and $\epsilon>0$, there exists $C=C(p,r,M, \epsilon, u_0,v_0)>0$ such that 
    \begin{align*}
        \frac{d}{dt} \int_\Omega v^{p-r} + M \int_\Omega v^{p-r} \leq \epsilon \int_\Omega \frac{u^{p+1}}{v^{r+1}}+C \qquad \text{for all }t\in (0,T_{\rm max}).
    \end{align*}
\end{lemma}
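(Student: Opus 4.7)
The plan is to test the second equation of \eqref{1} with the multiplier $(p-r)v^{p-r-1}$ and then control the resulting source term by a combination of the dissipative gradient integral, a small multiple of $\int_\Omega u^{p+1}/v^{r+1}$, and a finite constant.

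First I would differentiate $\int_\Omega v^{p-r}$ in time, substitute $v_t = \Delta v - v + u$, and integrate by parts using the Neumann boundary condition; since $p-r-1 > 0$ by hypothesis, this yields the identity
\begin{align*}
\frac{d}{dt}\int_\Omega v^{p-r} = -(p-r)(p-r-1)\int_\Omega v^{p-r-2}|\nabla v|^2 - (p-r)\int_\Omega v^{p-r} + (p-r)\int_\Omega u v^{p-r-1},
\end{align*}
whose first term is non-positive and will be retained as a dissipative reservoir. Next I would apply Young's inequality with exponents $p+1$ and $(p+1)/p$ to the cross term, factoring $uv^{p-r-1} = (u^{p+1}/v^{r+1})^{1/(p+1)} \cdot v^{p(p-r)/(p+1)}$, which produces
\begin{align*}
(p-r)\int_\Omega uv^{p-r-1} \le \epsilon \int_\Omega \frac{u^{p+1}}{v^{r+1}} + C_\epsilon \int_\Omega v^{p-r}.
\end{align*}
After adding $M\int_\Omega v^{p-r}$ to both sides of the identity, the problem reduces to absorbing the leftover $K\int_\Omega v^{p-r}$, with $K = M - (p-r) + C_\epsilon$, into the retained dissipative integral.

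The key step, which I expect to be the main technical obstacle, is a Gagliardo-Nirenberg interpolation applied to $w := v^{(p-r)/2}$, exploiting the identity $\|\nabla w\|_{L^2}^2 = \frac{(p-r)^2}{4}\int_\Omega v^{p-r-2}|\nabla v|^2$. Choosing the endpoint exponent so that the companion lower-order norm reduces to $\int_\Omega v$, which is uniformly bounded by Lemma \ref{L1}, Gagliardo-Nirenberg combined with a weighted Young inequality yields
\begin{align*}
\int_\Omega v^{p-r} \le \mu \int_\Omega v^{p-r-2}|\nabla v|^2 + C_\mu
\end{align*}
with $\mu > 0$ as small as desired. Choosing $\mu$ small enough that $K\mu \le (p-r)(p-r-1)$ closes the estimate and delivers the claim with $C = KC_\mu$. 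The delicate case is the regime $p-r > 2$, where the natural Gagliardo-Nirenberg endpoint $s = 2/(p-r)$ falls below $1$; this will be handled by first using Hölder's inequality to dominate $\int_\Omega v^{(p-r)/2}$ by a sub-unital power of $\int_\Omega v^{p-r}$ multiplied by a positive power of $\int_\Omega v$, and then absorbing the resulting sub-linear contribution on $\int_\Omega v^{p-r}$ back to the left-hand side.
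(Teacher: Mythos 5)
Your proposal follows essentially the same route as the paper: test with $(p-r)v^{p-r-1}$, apply Young's inequality with exponents $p+1$ and $(p+1)/p$ to the source term, and then absorb the leftover multiple of $\int_\Omega v^{p-r}$ into the dissipative term $(p-r)(p-r-1)\int_\Omega v^{p-r-2}|\nabla v|^2$ via Gagliardo--Nirenberg applied to $v^{(p-r)/2}$ with the low-order norm controlled by the $L^1$ bound on $v$ from Lemma~\ref{L1}. Your closing remark about the sub-unital endpoint exponent $2/(p-r)<1$ when $p-r>2$ is a legitimate technical point that the paper handles implicitly by invoking the extended form of the interpolation inequality; it does not change the argument.
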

\begin{proof}
    By testing the second equation of \eqref{1} by $(p-r)v^{p-r-1}$, we obtain that 
    \begin{align}\label{Lp-v.1}
        \frac{d}{dt} \int_\Omega v^{p-r} &= -(p-r)(p-r-1) \int_\Omega v^{p-r-2}|\nabla v|^2 - (p-r) \int_\Omega v^{p-r} +(p-r)\int_\Omega u v^{p-r-1}.
    \end{align}
    Applying Young's inequality with $\epsilon>0$ yields 
    \begin{align*}
     (p-r)\int_\Omega u v^{p-r-1} &= \int_\Omega \left \{ \epsilon \frac{u^{p+1}}{v^{r+1}}  \right \}^{\frac{1}{p+1}} \cdot \left \{ \epsilon^{-\frac{1}{p}} (p-r)^{\frac{p+1}{p}} v^{p-r} \right \}^{\frac{p}{p+1}} \notag \\
     &\leq \epsilon  \int_\Omega \frac{u^{p+1}}{v^{r+1}}+c_1 \int_\Omega v^{p-r},
    \end{align*}
    where $c_1= \epsilon^{-\frac{1}{p}} (p-r)^{\frac{p+1}{p}}$. {This, together with \eqref{Lp-v.1} implies that 
    \begin{align}\label{Lp-v.3}
        \frac{d}{dt} \int_\Omega v^{p-r} + M\int_\Omega v^{p-r} &\leq   -(p-r)(p-r-1) \int_\Omega v^{p-r-2}|\nabla v|^2 + \epsilon \int_\Omega \frac{u^{p+1}}{v^{r+1}} \notag \\
        &\quad+( M + c_1-  {p+r} ) \int_\Omega v^{p-r}  \qquad \text{for all }t\in (0,T_{\rm max}).
    \end{align}
    }Thanks to Gagliardo–Nirenberg interpolation inequality, Young's inequality, and Lemma \ref{L1}, it follows that 
    \begin{align}\label{Lp-v.2}
        \left ( M + c_1-  {p+r} \right )_+\int_\Omega v^{p-r} &\leq  c_2  \left ( \int_\Omega |\nabla v^\frac{p-r}{2}|^2 \right )^\theta \left (\int_\Omega v \right )^{(p-r)(1-\theta)} +c_2  \left (\int_\Omega v \right )^{p-r} \notag \\ 
        &\leq (p-r)(p-r-1) \int_\Omega v^{p-r-2}|\nabla v|^2 +c_3,
    \end{align}
    where $\theta = \frac{\frac{p-r}{2}-\frac{1}{2}}{\frac{p-r}{2}+\frac{1}{n}-\frac{1}{2}} \in (0,1)$, $c_2>0$, and $c_3>0$. Finally, combining \eqref{Lp-v.3} and \eqref{Lp-v.2}, we arrive at 
    {
\begin{align*}
    \frac{d}{dt}\int_\Omega v^{p-r} +M \int_\Omega v^{p-r} \leq \epsilon \int_\Omega \frac{u^{p+1}}{v^{r+1}} +c_3 \qquad \text{for all } t\in (0,T_{\rm max}),
\end{align*}
    }
    which completes the proof.
    
\end{proof}

    \begin{lemma} \label{L2}
    Let $p>1$, $r= \frac{p-1}{2}$ and { $\chi>0$}. Then there exists $C=C(p,n)>0$ such that the following holds
        \begin{align}
        \frac{d}{dt}\int_\Omega v^{-p-r}|\nabla v|^{2p} &\leq -p \int_\Omega v^{-p-r+2}|\nabla v|^{2p-2}|D^2 \ln v|^2-(p-r)\int_\Omega v^{-p-r}|\nabla v|^{2p} \notag \\
        &\quad-\frac{p-1}{4}\int_\Omega v^{-p-r-2}|\nabla v|^{2p+2} +C\int_\Omega v^{p-r} \notag \\
        &\quad+2p(2p-2+\sqrt{ n}) \int_\Omega uv^{-p-r}|\nabla v|^{2p-2}|D^2v| \notag \\
        &\quad +(2p-1)(p+r) \int_\Omega v^{-p-r-1}|\nabla v|^{2p } u
   \end{align}
   for any $t\in (0,T_{\rm max})$.
    \end{lemma}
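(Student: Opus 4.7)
The plan is to differentiate the functional in time, substitute $v_t = \Delta v - v + u$ from the second equation of \eqref{1}, and handle the three resulting contributions in turn. Writing
\[
\frac{d}{dt}\int_\Omega v^{-p-r}|\nabla v|^{2p} = -(p+r)\int_\Omega v^{-p-r-1}v_t|\nabla v|^{2p} + 2p\int_\Omega v^{-p-r}|\nabla v|^{2p-2}\nabla v\cdot\nabla v_t,
\]
the $v_t = -v$ piece immediately collapses to $[(p+r)-2p]\int_\Omega v^{-p-r}|\nabla v|^{2p} = -(p-r)\int_\Omega v^{-p-r}|\nabla v|^{2p}$, matching the stated term exactly.

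For the $v_t = \Delta v$ piece I would integrate by parts once in $-(p+r)\int v^{-p-r-1}\Delta v\,|\nabla v|^{2p}$ and, separately, apply the Bochner identity $\nabla v\cdot\nabla\Delta v = \tfrac12\Delta|\nabla v|^2 - |D^2 v|^2$ followed by a further integration by parts on $\Delta|\nabla v|^2$. This produces the boundary integral $p\int_{\partial\Omega} v^{-p-r}|\nabla v|^{2p-2}\frac{\partial|\nabla v|^2}{\partial\nu}$, which Lemma~\ref{Lbe} absorbs (with small $\eta$) into $\int v^{-p-r+2}|\nabla v|^{2p-2}|D^2\ln v|^2$ plus a $C\int v^{p-r}$ remainder, alongside volume terms in $|D^2 v|^2$, $|D^2 v\cdot\nabla v|^2$, the mixed quantity $v^{-p-r-1}|\nabla v|^{2p-2}\nabla v\cdot D^2 v\cdot\nabla v$, and $v^{-p-r-2}|\nabla v|^{2p+2}$. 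I would then invoke $|D^2 v|^2 = v^2|D^2\ln v|^2 + \tfrac{2}{v}\nabla v\cdot D^2 v\cdot\nabla v - \tfrac{|\nabla v|^4}{v^2}$ from the proof of Lemma~\ref{Lw2}, together with the analogous decomposition $D^2 v\cdot\nabla v = v\,D^2\ln v\cdot\nabla v + v^{-1}|\nabla v|^2\nabla v$, to recast all second-derivative quantities in terms of $D^2\ln v$. Using $r=(p-1)/2$ to evaluate $-(p+r)(p+r+1)+2p = -\tfrac{(9p+1)(p-1)}{4}$ and $4p(p+r-1) = 6p(p-1)$, the residual volume contribution becomes $-2p\int v^{-p-r+2}|\nabla v|^{2p-2}|D^2\ln v|^2 - 4p(p-1)\int v^{-p-r+2}|\nabla v|^{2p-4}|D^2\ln v\cdot\nabla v|^2 - \tfrac{p^2-1}{4}\int v^{-p-r-2}|\nabla v|^{2p+2}$ plus an indefinite cross term $-2p(p-1)\int v^{-p-r}|\nabla v|^{2p-2}\nabla v\cdot D^2\ln v\cdot\nabla v$. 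Bounding the latter by Cauchy--Schwarz and Young's inequality with parameter $\alpha = \tfrac14$ transfers precisely $4p(p-1)$ into the $|D^2\ln v\cdot\nabla v|^2$-coefficient (fully absorbing it) and precisely $\tfrac{p(p-1)}{4}$ into the gradient-squared coefficient, reducing it from $-\tfrac{p^2-1}{4}$ to $-\tfrac{p-1}{4}$; the surviving $-2p\int(\cdots)|D^2\ln v|^2$ trivially dominates the claimed $-p\int(\cdots)$.

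For the $v_t = u$ piece I would integrate by parts in $2p\int v^{-p-r}|\nabla v|^{2p-2}\nabla v\cdot\nabla u$ (boundary-free since $\partial v/\partial\nu = 0$) and expand
\[
\nabla\cdot\bigl(v^{-p-r}|\nabla v|^{2p-2}\nabla v\bigr) = -(p+r)v^{-p-r-1}|\nabla v|^{2p} + 2(p-1)v^{-p-r}|\nabla v|^{2p-4}\nabla v\cdot D^2 v\cdot\nabla v + v^{-p-r}|\nabla v|^{2p-2}\Delta v,
\]
then apply $|\Delta v|\leq\sqrt{n}\,|D^2 v|$ and $|\nabla v\cdot D^2 v\cdot\nabla v|\leq|\nabla v|^2|D^2 v|$. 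Added to the first-summand term $-(p+r)\int v^{-p-r-1}u|\nabla v|^{2p}$, the $uv^{-p-r-1}|\nabla v|^{2p}$-coefficient becomes $2p(p+r)-(p+r) = (2p-1)(p+r)$ and the $uv^{-p-r}|\nabla v|^{2p-2}|D^2 v|$-coefficient becomes $4p(p-1) + 2p\sqrt{n} = 2p(2p-2+\sqrt{n})$, matching both remaining terms exactly.

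The main obstacle is the coefficient bookkeeping in the $\Delta v$ piece: the simultaneous match of the asserted $-p$ and $-\tfrac{p-1}{4}$ depends crucially on the precise choice $r=(p-1)/2$ and on the Young parameter $\alpha = \tfrac14$, with any other split either leaving an uncontrolled positive $v^{-p-r-2}|\nabla v|^{2p+2}$ remainder or wasting dissipation budget that is needed later in the proof.
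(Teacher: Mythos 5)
Your proposal is correct and follows essentially the same route as the paper: the same splitting of the time derivative, the Bochner identity plus integration by parts, Lemma~\ref{Lbe} for the boundary term, $|\Delta v|\le\sqrt{n}|D^2v|$ for the $u$-terms, and a Young absorption of the indefinite cross term whose outcome (reducing the $\int v^{-p-r-2}|\nabla v|^{2p+2}$ coefficient to $-\frac{p-1}{4}$) coincides exactly with the paper's computation $(p+r)(p+r+1)-2p-\frac{p(p+r-1)^2}{p-1}=\frac{p-1}{4}$. The only cosmetic difference is that you recast everything in terms of $D^2\ln v$ before applying Young's inequality, whereas the paper applies Young's inequality to the $\nabla|\nabla v|^2\cdot\nabla v$ cross term first and converts only $|D^2v|^2$ afterwards; the resulting coefficients are identical.
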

    \begin{proof}
        Direct calculations show that
        \begin{align} \label{L2.1}
            \frac{d}{dt}\int_\Omega v^{-p-r}|\nabla v|^{2p}&= 2p \int_\Omega v^{-p-r}|\nabla v|^{2p-2}\nabla v \cdot \nabla v_t -(p+r)\int_\Omega v^{-p-r-1}|\nabla v|^{2p}v_t \notag \\
            &=I+J.
        \end{align}
    By using the second equation of \eqref{1} as well as the identities $\nabla v \cdot \nabla \Delta v = \frac{1}{2} \Delta |\nabla v|^2 - |D^2 v|^2$ and $\nabla |\nabla v|^2 = 2 D^2v \cdot \nabla v$, and performing several integration by parts, we obtain that 
    \begin{align} \label{L2.2}
        I&= 2p\int_\Omega v^{-p-r}|\nabla v|^{2p-2}\nabla v \cdot \nabla \left ( \Delta v- v+u \right ) \notag \\
        &=2p\int_\Omega v^{-p-r}|\nabla v|^{2p-2}\nabla v \cdot \nabla \Delta v -2p \int_\Omega  v^{-p-r}|\nabla v|^{2p}+2p\int_\Omega v^{-p-r}|\nabla v|^{2p-2}\nabla v \cdot \nabla u \notag \\
        &= p\int_\Omega v^{-p-r}|\nabla v|^{2p-2} \Delta|\nabla v|^2- 2p \int_\Omega v^{-p-r}|\nabla v|^{2p-2}|D^2v|^2  -2p \int_\Omega  v^{-p-r}|\nabla v|^{2p} \notag \\
        &\quad +2p(p+r)\int_\Omega v^{-p-r-1}|\nabla v|^{2p}u -2p(p-1) \int_\Omega uv^{-p-r} |\nabla v|^{2p-4} \nabla |\nabla v|^2 \cdot \nabla v  \notag \\
        &\quad-2p \int_\Omega u v^{-p-r}|\nabla v|^{2p-2}\Delta v \notag \\
        &= -p(p-1)\int_\Omega v^{-p-r}|\nabla v|^{2p-4}|\nabla |\nabla v|^2|^2+p(p+r)\int_\Omega v^{-p-r-1}|\nabla v|^{2p-2} \nabla v \cdot \nabla |\nabla v|^2 \notag \\
        &\quad+p \int_{\partial \Omega} v^{-p-r}|\nabla v|^{2p-2}\cdot \frac{\partial |\nabla v|^2}{\partial \nu}- 2p \int_\Omega v^{-p-r}|\nabla v|^{2p-2}|D^2v|^2  -2p \int_\Omega  v^{-p-r}|\nabla v|^{2p} \notag \\
        &\quad +2p(p+r)\int_\Omega v^{-p-r-1}|\nabla v|^{2p}u -4p(p-1) \int_\Omega uv^{-p-r} |\nabla v|^{2p-4} \nabla v \cdot D^2 v \cdot \nabla v  \notag \\
        &\quad-2p \int_\Omega u v^{-p-r}|\nabla v|^{2p-2}\Delta v,
    \end{align}
    and 
    \begin{align} \label{L2.3}
        J&= -(p+r)\int_\Omega v^{-p-r-1}|\nabla v|^{2p} \Delta v +(p+r)\int_\Omega v^{-p-r}|\nabla v|^{2p} -(p+r)\int_\Omega v^{-p-r-1}|\nabla v|^{2p } u \notag \\
        &=-(p+r)(p+r+1) \int_\Omega v^{-p-r-2}|\nabla v|^{2p+2}+p(p+r) \int_\Omega v^{-p-r-1}|\nabla v|^{2p-2} \nabla |\nabla v|^2 \cdot \nabla v \notag \\
        &\quad +(p+r)\int_\Omega v^{-p-r}|\nabla v|^{2p} -(p+r)\int_\Omega v^{-p-r-1}|\nabla v|^{2p } u.
    \end{align}
    Thanks to \eqref{L2.1}, \eqref{L2.2}, \eqref{L2.3} and the identity that 
    \begin{align*}
        |D^2v|^2= v^2|D^2 \ln v|^2+\frac{1}{v} \nabla v \cdot \nabla |\nabla v|^2 -\frac{|\nabla v|^4}{v^2},
    \end{align*}
    we can write 
    \begin{align*}
        \frac{d}{dt}\int_\Omega v^{-p-r}|\nabla v|^{2p}&= -2p \int_\Omega v^{-p-r+2}|\nabla v|^{2p-2}|D^2 \ln v|^2-p(p-1)\int_\Omega v^{-p-r}|\nabla v|^{2p-4}|\nabla |\nabla v|^2|^2 \notag \\
        &\quad+2p(p+r-1)\int_\Omega v^{-p-r-1}|\nabla v|^{2p-2} \nabla |\nabla v|^2 \cdot \nabla v -(p-r)\int_\Omega v^{-p-r}|\nabla v|^{2p}\notag \\
        &\quad- \left [ (p+r)(p+r+1)-2p \right]\int_\Omega v^{-p-r-2}|\nabla v|^{2p+2}+(2p-1)(p+r) \int_\Omega v^{-p-r-1}|\nabla v|^{2p } u \notag \\
        &\quad-4p(p-1) \int_\Omega uv^{-p-r} |\nabla v|^{2p-4} \nabla v \cdot D^2 v \cdot \nabla v  -2p \int_\Omega u v^{-p-r}|\nabla v|^{2p-2}\Delta v \notag \\
        &\quad+p \int_{\partial \Omega} v^{-p-r}|\nabla v|^{2p-2}\cdot \frac{\partial |\nabla v|^2}{\partial \nu}.
    \end{align*}
    Applying the inequality that $|\Delta v|\leq \sqrt{n}|D^2v|$ and Young's inequality  yields 
    \begin{align} \label{L2.3'}
         \frac{d}{dt}\int_\Omega v^{-p-r}|\nabla v|^{2p} &\leq -2p \int_\Omega v^{-p-r+2}|\nabla v|^{2p-2}|D^2 \ln v|^2-(p-r)\int_\Omega v^{-p-r}|\nabla v|^{2p} \notag \\
         &\quad- \left [ (p+r)(p+r+1)-2p-\frac{p(p+r-1)^2}{p-1} \right ]\int_\Omega v^{-p-r-2}|\nabla v|^{2p+2} \notag \\
         &\quad+2p(2p-2+\sqrt{ n}) \int_\Omega uv^{-p-r}|\nabla v|^{2p-2}|D^2v| \notag \\
         &\quad+(2p-1)(p+r) \int_\Omega v^{-p-r-1}|\nabla v|^{2p } u+p \int_{\partial \Omega} v^{-p-r}|\nabla v|^{2p-2}\cdot \frac{\partial |\nabla v|^2}{\partial \nu}.
    \end{align}
   Applying Lemma \ref{Lbe}, noting that  $(p+r)(p+r+1)-2p-\frac{p(p+r-1)^2}{p-1}= \frac{p-1}{4}$ since $r=\frac{p-1}{2}$, and using \eqref{L2.3'}, we arrive at
      \begin{align}
        \frac{d}{dt}\int_\Omega v^{-p-r}|\nabla v|^{2p} &\leq -p \int_\Omega v^{-p-r+2}|\nabla v|^{2p-2}|D^2 \ln v|^2-(p-r)\int_\Omega v^{-p-r}|\nabla v|^{2p} \notag \\
        &\quad-\frac{p-1}{4}\int_\Omega v^{-p-r-2}|\nabla v|^{2p+2} +c_1\int_\Omega v^{p-r} \notag \\
        &\quad+2p(2p-2+\sqrt{ n}) \int_\Omega uv^{-p-r}|\nabla v|^{2p-2}|D^2v| \notag \\
        &\quad +(2p-1)(p+r) \int_\Omega v^{-p-r-1}|\nabla v|^{2p } u,
   \end{align}
   for some $c_1>0$. The proof is now complete.
        
    \end{proof}

The following lemma, which justifies the consideration of the energy functional \( F_\lambda(u,v) \), indicates that the quantity \( \int_\Omega \frac{|\nabla v|^{2p+2}}{v^{p+r+2}} \)  can be controlled by \( \int_\Omega \frac{u^{p+1}}{v^{r+1}} \).

\begin{lemma}\label{L2'}
Let $p\in\bigl(\frac{n}{2},\,n\bigr)$, set $r=\frac{p-1}{2}$, and let $\chi>0$. 
Then there exists a constant $C=C(p,n)>0$ such that the inequality
\begin{align}\label{L2'-1}
\frac{d}{dt}\int_\Omega \frac{|\nabla v|^{2p}}{v^{p+r}}
+ L_1 \int_\Omega \frac{|\nabla v|^{2p}}{v^{p+r}}
+ L_2 \int_\Omega \frac{|\nabla v|^{2p+2}}{v^{p+r+2}}
\leq L_3 \int_\Omega \frac{u^{p+1}}{v^{r+1}}
+ C \int_\Omega v^{p-r}
\end{align}
holds for all $t\in(0,T_{\max})$, where $L_1=\frac{n+2}{8}$ and the constants $L_2$ and $L_3$ are specified in Remark~\ref{Rm}.
\end{lemma}

    \begin{proof}
        Using Lemma \ref{L2} and noting that $\frac{n}{2}<p<n$ yields 
        \begin{align} \label{L2'.1}
             \frac{d}{dt} \int_\Omega \frac{|\nabla v|^{2p}}{v^{p+r}}&+ \frac{n+2}{8} \int_\Omega \frac{|\nabla v|^{2p}}{v^{p+r}} + \frac{n-2}{8}\int_\Omega \frac{|\nabla v|^{2p+2}}{v^{p+r+2}}+ \frac{n}{2}\int_\Omega v^{-p-r+2}|\nabla v|^{2p-2}|D^2 \ln v|^2 \notag \\
             &\leq c_1 \int_\Omega v^{p-r}+ c_2\int_\Omega uv^{-p-r}|\nabla v|^{2p-2}|D^2v|+c_3 \int_\Omega v^{-p-r-1}|\nabla v|^{2p } u,
        \end{align}
       where $c_1=c_1(p,n)>0$, $c_2= 2n(2n-2+\sqrt{n})$, and $c_3= \frac{(2n-1)(3n-1)}{2}$. Thanks to Lemma \ref{Lw2}, we obtain that 
        \begin{align} \label{L2'.2}
            \int_\Omega \frac{|\nabla v|^{2p+2}}{v^{p+r+2}} &\leq \left ( 4 \left ( \frac{p+r+1+\sqrt{n}}{p-r+1} \right )^2+2\right )\int_\Omega v^{-p-r+2}|\nabla v|^{2p-2}|D^2 \ln v|^2 \notag \\
            &\leq 102 \int_\Omega v^{-p-r+2}|\nabla v|^{2p-2}|D^2 \ln v|^2,
        \end{align}
        where the last inequality follows from
        \begin{align}\label{L2'.3}
             \frac{p+r+1+\sqrt{n}}{p-r+1} =\frac{3p+1+2\sqrt{n}}{p+3} <5,
        \end{align}
        {which holds for all $p > \sqrt{n} - 7$; this condition is satisfied since $p > \frac{n}{2} > \sqrt{n} - 7$.}
        Similarly, we also have
        \begin{align} \label{L2'.4}
            \int_\Omega v^{-p-r}|\nabla v|^{2p-2} |D^2 v|^2 &\leq \left ( \frac{p-r+1}{p-r-1} \left ( \frac{p+r+1+\sqrt{n}}{p-r+1} \right )^2 +1 \right )\int_\Omega v^{-p-r+2}|\nabla v|^{2p-2}|D^2 \ln v|^2 \notag \\
            &\leq 226\int_\Omega v^{-p-r+2}|\nabla v|^{2p-2}|D^2 \ln v|^2,
        \end{align}
        {where the last inequality follows from \eqref{L2'.3} together with the estimate
\[
    \frac{p-r+1}{p-r-1}
    = \frac{p+3}{p-1}
    < 9,
\]
which is valid since $p > \frac{n}{2} \ge \frac{3}{2}$.}
    In light of Young's inequality, we derive that 
    \begin{align}\label{L2'.5}
        c_3 \int_\Omega v^{-p-r-1}|\nabla v|^{2p}u &= {\int_\Omega \left [ \left ( \frac{n-2}{16} \right )^{\frac{p}{p+1}} \frac{|\nabla v|^{2p}}{v^{\frac{p(p+r+2)}{p+1}}} \right ] \cdot \left [ c_3 \left ( \frac{16}{n-2}\right )^{\frac{p}{p+1}} \frac{u}{v^{\frac{r+1}{p+1}}} \right ] } \notag \\
        & {\leq \frac{p(n-2)}{16(p+1)}\int_\Omega \frac{|\nabla v|^{2p+2}}{v^{p+r+2}} + \left ( \frac{16}{n-2} \right )^p\frac{c_3^{p+1}}{p+1} \int_\Omega \frac{u^{p+1}}{v^{r+1}} } \notag\\
        &\leq \frac{n-2}{16}\int_\Omega \frac{|\nabla v|^{2p+2}}{v^{p+r+2}} + \left ( \frac{16}{n-2} \right )^pc_3^{p+1} \int_\Omega \frac{u^{p+1}}{v^{r+1}} \notag \\
        &\leq \frac{n-2}{16}\int_\Omega \frac{|\nabla v|^{2p+2}}{v^{p+r+2}} + 16^nc_3^{n+1} \int_\Omega \frac{u^{p+1}}{v^{r+1}}.
    \end{align}        
    Applying Hölder's inequality, \eqref{L2'.2}, \eqref{L2'.4} and Young's inequality and {noting that $p<n$ and $ \sqrt{226} (102)^{\frac{p-1}{2(p+1)}} < 152$, we obtain that } 
        \begin{align}\label{L2'.6}
             c_2\int_\Omega uv^{-p-r}|\nabla v|^{2p-2}|D^2v| &\leq c_2 \left ( \int_\Omega v^{-p-r}|\nabla v|^{2p-2}|D^2 v|^2 \right )^\frac{1}{2} \left ( \int_\Omega \frac{|\nabla v|^{2p+2}}{v^{p+r+2}} \right)^\frac{p-1}{2(p+1)} \left ( \int_\Omega \frac{u^{p+1}}{v^{r+1}}\right )^{\frac{1}{p+1}} \notag \\
            & {\leq \sqrt{226} (102)^{\frac{p-1}{2(p+1)}}c_2\left ( \int_\Omega v^{-p-r+2}|\nabla v|^{2p-2}|D^2 \ln v|^2 \right )^{\frac{p}{p+1}} \left ( \int_\Omega \frac{u^{p+1}}{v^{r+1}}\right )^{\frac{1}{p+1}} }  \notag \\
             &\leq 152 c_2 \left ( \int_\Omega v^{-p-r+2}|\nabla v|^{2p-2}|D^2 \ln v|^2 \right )^{\frac{p}{p+1}} \left ( \int_\Omega \frac{u^{p+1}}{v^{r+1}}\right )^{\frac{1}{p+1}} \notag \\
             &{\leq \frac{np}{2(p+1)} \int_\Omega v^{-p-r+2}|\nabla v|^{2p-2}|D^2 \ln v|^2+ \frac{(152c_2)^{p+1}}{p+1} \int_\Omega \frac{u^{p+1}}{v^{r+1}} } \notag \\
             &\leq \frac{n}{2}\int_\Omega v^{-p-r+2}|\nabla v|^{2p-2}|D^2 \ln v|^2 + (152c_2)^{n+1}\int_\Omega \frac{u^{p+1}}{v^{r+1}}.  
        \end{align}
   { Combining \eqref{L2'.1}, \eqref{L2'.5} and \eqref{L2'.6}, we infer that 
    \begin{align*}
           \frac{d}{dt} \int_\Omega \frac{|\nabla v|^{2p}}{v^{p+r}}+\frac{n+2}{8}\int_\Omega \frac{|\nabla v|^{2p}}{v^{p+r}}+  \frac{n-2}{16}\int_\Omega \frac{|\nabla v|^{2p+2}}{v^{p+r+2}} &\leq \left ( 16^n c_3^{n+1} +(152c_2)^{n+1} \right ) \int_\Omega \frac{u^{p+1}}{v^{r+1}} \notag \\
           &\quad+c_1\int_\Omega v^{p-r} \qquad \text{for all }t\in (0,T_{\rm max}).
    \end{align*} 
    This implies \eqref{L2'-1} and thereby completes the proof.}
    \end{proof}

The following identity plays a crucial role in establishing uniform boundedness in \( L^p(\Omega) \); for a detailed proof, we refer the reader to \cite{Winkler+2011}[Lemma 2.3].

\begin{lemma} \label{L5}
    Let $p\in \mathbb{R}$, $r \in  \mathbb{R}$ and {$\chi>0$}. Then the following identity holds for any $t\in (0,T_{\rm max})$:
    \begin{align*}
        \frac{d}{dt}\int_\Omega u^p v^{-r} + r\int_\Omega u^{p+1}v^{-r-1} &= -p(p-1)\int_\Omega u^{p-2}v^{-r}|\nabla u|^2 - \left[ r(r+1)+pr\chi \right ] \int_\Omega u^p v^{-r-2}|\nabla v|^2 \notag \\
        &\quad+  \left[ 2pr+p(p-1)\chi \right ] \int_\Omega u^{p-1}v^{-r-1} \nabla u \cdot \nabla v+r \int_\Omega u^pv^{-r}.
        \end{align*}    
\end{lemma}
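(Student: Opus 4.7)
The identity is fundamentally a direct computation based on differentiating under the integral sign and applying integration by parts; the plan is to execute this cleanly and keep track of coefficients.

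First I would differentiate the time-dependent integrand and split the derivative as
\begin{align*}
\frac{d}{dt}\int_\Omega u^p v^{-r} = p\int_\Omega u^{p-1}v^{-r} u_t - r \int_\Omega u^p v^{-r-1} v_t,
\end{align*}
and then substitute $u_t = \Delta u - \chi\nabla\cdot(\tfrac{u}{v}\nabla v)$ and $v_t = \Delta v - v + u$ from system \eqref{1}. The $v_t$-term immediately produces $r\int_\Omega u^p v^{-r}$ and $-r\int_\Omega u^{p+1}v^{-r-1}$, the latter of which is the desired term to be moved to the left-hand side.

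Next I would process the four remaining contributions by integration by parts, using that $\frac{\partial u}{\partial \nu} = \frac{\partial v}{\partial \nu} = 0$ on $\partial\Omega$ so every boundary term vanishes. Concretely: for $p\int_\Omega u^{p-1}v^{-r}\Delta u$, integrating by parts and expanding $\nabla(u^{p-1}v^{-r})$ yields $-p(p-1)\int_\Omega u^{p-2}v^{-r}|\nabla u|^2 + pr \int_\Omega u^{p-1}v^{-r-1}\nabla u\cdot\nabla v$; for the chemotactic term $-p\chi \int_\Omega u^{p-1} v^{-r} \nabla\cdot(\tfrac{u}{v}\nabla v)$, the same device produces $p\chi(p-1)\int_\Omega u^{p-1}v^{-r-1}\nabla u\cdot \nabla v - pr\chi\int_\Omega u^p v^{-r-2}|\nabla v|^2$; and for $-r\int_\Omega u^pv^{-r-1}\Delta v$, expanding $\nabla(u^pv^{-r-1})$ produces $rp\int_\Omega u^{p-1}v^{-r-1}\nabla u\cdot\nabla v - r(r+1)\int_\Omega u^pv^{-r-2}|\nabla v|^2$.

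Finally I would collect like terms: the three contributions of the form $\int_\Omega u^{p-1}v^{-r-1}\nabla u\cdot\nabla v$ combine with coefficient $pr + p(p-1)\chi + rp = 2pr + p(p-1)\chi$, while the two contributions of the form $\int_\Omega u^p v^{-r-2}|\nabla v|^2$ combine with coefficient $-[r(r+1) + pr\chi]$. Rearranging yields exactly the stated identity. There is no real obstacle here beyond bookkeeping; the only point requiring minimal care is to ensure the signs and the product-rule expansions of $\nabla(u^{p-1}v^{-r})$ and $\nabla(u^p v^{-r-1})$ are done consistently so that the three $\nabla u\cdot\nabla v$ coefficients add correctly.
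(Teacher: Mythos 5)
Your computation is correct: the splitting of the time derivative, the three integrations by parts (with boundary terms vanishing under the homogeneous Neumann conditions and positivity of $u,v$ guaranteeing the integrands make sense), and the bookkeeping of the coefficients $2pr+p(p-1)\chi$ and $-[r(r+1)+pr\chi]$ all check out. The paper itself does not prove this lemma but defers to \cite{Winkler+2011}, where the argument is exactly this direct computation, so your proposal matches the intended proof.
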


As a consequence, we have the following key estimate which will play an important role in deriving the boundedness of $F_\lambda(u,v)$. 
    
\begin{lemma} \label{L3}
    {Let $r= \frac{p-1}{2}$ and $\chi>0$}. There exist a positive constant $C=C(p,n)$ such that for any $p \in \left ( \frac{n}{2}, n \right )$ the following holds
    \begin{align} \label{L3-1}
        \frac{d}{dt} \int_\Omega u^p v^{-r}+ \int_\Omega u^p v^{-r} + L_4 \int_\Omega u^{p+1}v^{-r-1} &\leq L_5(\chi^2p-1)_+\int_\Omega u^pv^{-p-r-2}|\nabla v|^2 \notag \\
        &\quad+C \int_\Omega v^{p-r} \qquad \text{for all }t\in (0,T_{\rm max}),
    \end{align}
   {where $L_4$ and $L_5$ are given in Remark \ref{Rm}.}
\end{lemma}

\begin{proof}
    From Lemma \ref{L5}, we have that 
     \begin{align} \label{L5.1}
        I:=\frac{d}{dt}\int_\Omega u^p v^{-r} + r\int_\Omega u^{p+1}v^{-r-1} &= -p(p-1)\int_\Omega u^{p-2}v^{-r}|\nabla u|^2 - \left[ r(r+1)+pr\chi \right ] \int_\Omega u^p v^{-r-2}|\nabla v|^2 \notag \\
        &\quad+  \left[ 2pr+p(p-1)\chi \right ] \int_\Omega u^{p-1}v^{-r-1} \nabla u \cdot \nabla v+r \int_\Omega u^pv^{-r}.
        \end{align}
    We apply Young's inequality to obtain that 
    \begin{align*}
        \left[ 2pr+p(p-1)\chi \right ] \int_\Omega u^{p-1}v^{-r-1} \nabla u \cdot \nabla v &\leq p(p-1)\int_\Omega u^{p-2}v^{-r}|\nabla u|^2 \notag \\
        &\quad+ \frac{\left [2pr+p(p-1)\chi \right ]^2}{4p(p-1)}\int_\Omega u^p v^{-r-2}|\nabla v|^2.
    \end{align*}
    Therefore, \eqref{L5.1} yields 
    \begin{align}\label{L5.2}
         I &\leq  -  h(p.r,\chi) \int_\Omega u^p v^{-r-2}|\nabla v|^2 +r \int_\Omega u^p v^{-r}, 
    \end{align}
    where 
    \begin{align*}
        h(p,r,\chi)= r(r+1)+pr\chi-\frac{\left [2pr+p(p-1)\chi \right ]^2}{4p(p-1)} = -\frac{(p-1)(\chi^2p-1)}{4}.
    \end{align*}
    By Young's inequality, one can find $c_1=c_1(p)>0$ such that
    \begin{align}\label{L5.3}
       (r+1) \int_\Omega u^pv^{-r} \leq \frac{r}{2}\int_\Omega u^{p+1}v^{-r-1}+ c_1 \int_\Omega v^{p-r}.
    \end{align}
 { By combining \eqref{L5.1}--\eqref{L5.3} and noting that $\frac{r}{2} > \frac{n-2}{8}$ and $\frac{p-1}{4}> \frac{n-2}{8}$, we arrive at 
    \begin{align*}
        \frac{d}{dt}\int_\Omega u^pv^{-r}+\int_\Omega u^pv^{-r} + \frac{n-2}{8} \int_\Omega u^{p+1}v^{-r-1} &\leq  \frac{(n-2)(\chi^2p-1)_+}{8}\int_\Omega u^p v^{-r-2}|\nabla v|^2 \notag \\
        &\quad+c_1 \int_\Omega v^{p-r} \qquad \text{for all }t\in (0,T_{\rm max}).
    \end{align*}
This proves \eqref{L3-1} and thereby concludes the proof. }
        
\end{proof}

\section{Proof of the main result}\label{S4}
Let us begin this section by showing that the energy function $F_{\lambda}(u,v)$, with an appropriate choice of $\lambda$ is uniformly bounded.  
{\begin{lemma} \label{L4}
Let $r = \frac{p-1}{2}$ and
\(
\delta =  \frac{L_2 L_4}{2 L_3 L_5},
\)
where $L_i$, with $i=2,3,4,5$ are given in Remark \ref{Rm}.
For any
\[
0 < \chi < \sqrt{\frac{2(1+\delta)}{n}}
\quad \text{and} \quad
1 < p < \min\left\{ \frac{1+\delta}{\chi^2}, \, n \right\},
\]
there exists a constant $C = C(u_0, v_0, n, \Omega, p) > 0$ such that
\begin{align} \label{L4-1}
    \int_\Omega v^{p-r}
    + \int_\Omega u^p v^{-r}
    + \int_\Omega v^{-p-r} |\nabla v|^{2p}
    \le C
    \qquad \text{for all } t \in (0, T_{\max}).
\end{align}
\end{lemma}

\begin{proof}
We first assume that $p > \frac{n}{2}$, and define $y(t) := \int_\Omega u^p v^{-r} + \lambda \int_\Omega v^{-p - r} |\nabla v|^{2p}$ with $\lambda = \frac{L_4}{2L_3}$.
 From Lemma \ref{L2'} and Lemma \ref{L3}, we find that
    \begin{align} \label{L4.1}
        y'(t) +c_1 y(t)+ \frac{L_4L_2}{2L_3}\int_\Omega \frac{|\nabla v|^{2p+2}}{v^{p+r+2}} +  \frac{L_4}{2} \int_\Omega u^{p+1}v^{-r-1} &\leq L_5\left ( \chi^2p-1\right )_+\int_\Omega u^pv^{-r-2}|\nabla v|^2 \notag \\
        &\quad+c_2\int_\Omega v^{p-r}.
    \end{align}
     $c_1= \min \left \{ 1,\frac{L_1L_4}{2L_3}\right \}$ with $L_1$ is given in Lemma \ref{L2'} and $c_2>0$. 
     {Since $\delta = \frac{L_2 L_4}{2 L_3 L_5}$, we infer that for any
\[
\frac{n}{2} < p < \frac{1+\delta}{\chi^2},
\]
it holds that
\[
L_5 \big( \chi^2 p - 1 \big)_+
\le \min\left\{ \frac{L_2 L_4}{2 L_3}, \, \frac{L_4}{4} \right\}.
\]
}
     Using this and applying Young's inequality yields 
    \begin{align} \label{L4.2}
        L_5\left ( \chi^2p-1\right )_+\int_\Omega u^pv^{-r-2}|\nabla v|^2 &= L_5 \left ( \chi^2p-1\right )_+\int_\Omega \frac{u^p}{v^{\frac{p(r+1)}{p+1}}}\cdot \frac{|\nabla v|^2}{ v^{\frac{p+r+2}{p+1}}} \notag \\
        &\leq {\frac{pL_5\left ( \chi^2p-1\right )_+}{p+1}\int_\Omega \frac{u^{p+1}}{v^{r+1}}+ \frac{L_5\left ( \chi^2p-1\right )_+}{p+1}\int_\Omega \frac{|\nabla v|^{2p+2}}{v^{p+r+2}}  } \notag \\
        &\leq  L_5\left ( \chi^2p-1\right )_+\int_\Omega \frac{u^{p+1}}{v^{r+1}} +L_5\left ( \chi^2p-1\right )_+\int_\Omega \frac{|\nabla v|^{2p+2}}{v^{p+r+2}} \notag \\
        & {\leq \frac{L_4}{4}\int_\Omega \frac{u^{p+1}}{v^{r+1}} + \frac{L_4L_2}{2L_3} \int_\Omega \frac{|\nabla v|^{2p+2}}{v^{p+r+2}}.}
    \end{align}
  Combining \eqref{L4.1} and \eqref{L4.2}, it follows that
    \begin{align}\label{L4.3}
        y'(t)+c_1 y(t) +\frac{L_4}{4}\int_\Omega \frac{u^{p+1}}{v^{r+1}} \leq c_2 \int_\Omega v^{p-r} \qquad \text{for all }t \in (0, T_{\rm max}).
    \end{align}
    Applying Lemma \ref{Lp-v} with $M=c_2+c_1$ and $\epsilon =\frac{L_4}{4}$, we derive that
    \begin{align}\label{L4.4}
        \frac{d}{dt}\int_\Omega v^{p-r} + (c_2+c_1)\int_\Omega v^{p-r} \leq \frac{L_4}{4}\int_\Omega \frac{u^{p+1}}{v^{r+1}}+ c_3  \qquad \text{for all }t \in (0, T_{\rm max}),
    \end{align}
    for some $c_3>0$. Combining \eqref{L4.3} and \eqref{L4.4}, we arrive at 
    \begin{align*}
        \frac{d}{dt} F_{\lambda}(u,v) +c_1F_{\lambda}(u,v) \leq c_3 \qquad \text{for all }t\in (0,T_{\rm max}),
    \end{align*}
    which combines with Gronwall's inequality implies \eqref{L4-1} for any $\frac{n}{2}<p<\frac{1+\delta}{\chi^2} $.  Now, for any $1<q<p$, applying Hölder's inequality and Lemma \ref{L1}, it follows that
    \begin{align*}
        \int_\Omega u^q v^{-\frac{q-1}{2}}+ \int_\Omega v^{-\frac{3q-1}{2}}|\nabla v|^{2q} &\leq \left ( \int_\Omega u^pv^{-\frac{p-1}{2}} \right )^{\frac{q}{p}} \left ( \int_\Omega v^{\frac{1}{2}} \right )^{1-\frac{q}{p}}+\left ( \int_\Omega v^{-\frac{3p-1}{2}} |\nabla v|^{2p} \right )^{\frac{q}{p}} \left ( \int_\Omega v^{\frac{1}{2}} \right )^{1-\frac{q}{p}} \notag \\        
        & \leq |\Omega|^{\frac{1}{2}-\frac{q}{2p}}\left ( \int_\Omega v \right )^{\frac{1}{2}-\frac{q}{2p}}\left \{ \left ( \int_\Omega u^pv^{-\frac{p-1}{2}} \right )^{\frac{q}{p}} + \left ( \int_\Omega v^{-\frac{3p-1}{2}} |\nabla v|^{2p} \right )^{\frac{q}{p}}\right \}  .
    \end{align*}
    Therefore, \eqref{L4-1} also holds for any $1<p \leq \frac{n}{2}$. The proof is now complete.
\end{proof}
As a consequence, we can now derive an $L^{\frac{n}{2}+}$ for $u$ as follows:
\begin{corollary} \label{C1}
    Assume that $\chi < \sqrt{\frac{2(1+\delta)}{n}}$ where $\delta$ is given in Lemma \ref{L4}. Then there exists $p> \frac{n}{2}$ such that 
    \begin{align*}
      \sup_{t\in (0,T_{\rm max})}  \int_\Omega u^p(\cdot,t) <\infty.
    \end{align*}
\end{corollary}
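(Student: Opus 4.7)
The plan is to deduce $\int_\Omega u^p \leq C$ by first upgrading the estimates of Lemma~\ref{L4} into a uniform $L^\infty$ bound on $v$, and then removing the weight $v^{-r}$ from the control on $\int_\Omega u^p v^{-r}$. The hypothesis $\chi^2 < \frac{2(1+\delta)}{n}$ is equivalent to $\frac{1+\delta}{\chi^2} > \frac{n}{2}$, so the admissible range in Lemma~\ref{L4} intersects $\left(\frac{n}{2}, \infty\right)$. I would therefore fix some $p$ with $\frac{n}{2} < p < \min\left\{\frac{1+\delta}{\chi^2},\, n\right\}$ and set $r = \frac{p-1}{2}$, upon which Lemma~\ref{L4} supplies a constant $C$ such that, uniformly for $t \in (0, T_{\rm max})$,
$$\int_\Omega v^{p-r} + \int_\Omega u^p v^{-r} + \int_\Omega v^{-p-r}|\nabla v|^{2p} \leq C.$$

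The key step is the change of unknown $w := v^\gamma$ with $\gamma := \frac{p+1}{4p}$; this exponent is calibrated so that $2p(\gamma-1) = -(p+r)$ and $2p\gamma = p - r$, which yields the two identities
$$\int_\Omega |\nabla w|^{2p} = \gamma^{2p}\int_\Omega \frac{|\nabla v|^{2p}}{v^{p+r}}, \qquad \int_\Omega w^{2p} = \int_\Omega v^{p-r}.$$
Both right-hand sides are bounded by the previous display, so $w$ is uniformly bounded in $W^{1,2p}(\Omega)$ on $(0, T_{\rm max})$. Because $p > \frac{n}{2}$ forces $2p > n$, the Sobolev embedding $W^{1,2p}(\Omega) \hookrightarrow L^\infty(\Omega)$ then furnishes a time-uniform bound on $\|w(\cdot,t)\|_{L^\infty}$, and hence on $\|v(\cdot,t)\|_{L^\infty}$.

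With $v$ uniformly bounded above, the proof concludes via
$$\int_\Omega u^p = \int_\Omega (u^p v^{-r})\, v^r \leq \|v(\cdot,t)\|_{L^\infty}^r \int_\Omega u^p v^{-r} \leq C',$$
which is the desired estimate. The only real obstacle I anticipate is spotting the correct exponent $\gamma = \frac{p+1}{4p}$ that simultaneously couples the two separate pieces of information in Lemma~\ref{L4} --- the $L^{p-r}$ bound on $v$ and the weighted $L^{2p}$ bound on $\nabla v$ --- into a single $W^{1,2p}$ estimate for $v^\gamma$; once this observation is made, the requirement $p > \frac{n}{2}$ that was already built into the choice of $p$ is exactly what makes the Sobolev embedding deliver an $L^\infty$ bound, and a one-line Hölder estimate completes the argument.
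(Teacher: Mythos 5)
Your argument is correct: the exponent $\gamma=\frac{p+1}{4p}$ does satisfy $2p\gamma=p-r=\frac{p+1}{2}$ and $2p(\gamma-1)=-(p+r)=-\frac{3p-1}{2}$, so $\int_\Omega w^{2p}$ and $\int_\Omega|\nabla w|^{2p}$ are exactly the two quantities $\int_\Omega v^{p-r}$ and $\gamma^{2p}\int_\Omega v^{-p-r}|\nabla v|^{2p}$ controlled uniformly by Lemma~\ref{L4}, and since $2p>n$ the Morrey embedding $W^{1,2p}(\Omega)\hookrightarrow L^\infty(\Omega)$ gives a time-uniform upper bound for $v$, after which the weight $v^{-r}$ is removed by a one-line H\"older estimate. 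This is, however, a genuinely different route from the paper. The paper's proof uses only the single component $\sup_t\int_\Omega u^{p_0}v^{-r}<\infty$ of Lemma~\ref{L4}, removes the weight via H\"older's inequality against $\|v\|_{L^{p r/(p_0-p)}}$ for a suitably smaller $p$, controls that norm through the semigroup regularity estimate of Lemma~\ref{v}(i) in terms of $\sup_{t<T}\|u\|_{L^p}$ itself, and closes a superlinear feedback inequality $M(T)\le c_3(1+M(T)^{r/p_0})$ with $r/p_0<1$. Your version instead exploits the gradient term $\int_\Omega v^{-p-r}|\nabla v|^{2p}$ of the energy functional, which the paper's Corollary~\ref{C1} never touches; the payoff is that you avoid both the bootstrap argument and the somewhat delicate exponent condition $p<\frac{p_0+1}{2(n-p_0+1)}$ needed to invoke Lemma~\ref{v}, you keep the same exponent $p>\frac n2$ rather than passing to a smaller one, and you obtain as a by-product a uniform $L^\infty$ bound on $v$, which is additional information the paper's argument does not produce at this stage. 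The trade-off is that your proof is specific to the situation where the $W^{1,2p}$ information on $v^\gamma$ is available with $2p>n$, whereas the paper's bootstrap only needs the weighted $L^{p_0}$ bound on $u$.
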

\begin{proof}
    Applying Lemma \ref{L4} yields there exists $p_0\in \left (\frac{n}{2},n \right )$ such that
    \begin{align} \label{C1.1}
        \sup_{t \in (0,T_{\rm max})} \int_\Omega u^{p_0} v^{-r} := c_1<\infty,
    \end{align}
    where $r=\frac{p_0-1}{2}$.
    For any $T <T_{\rm max}$, setting 
    \begin{align*}
        M(T):= \sup_{t\in (0,T)}  \int_\Omega u^p(\cdot,t) 
    \end{align*}
    then we see from Lemma \ref{local} that $M(T)< \infty$. Since $p_0> \frac{n}{2}$ implies that $\frac{p_0+1}{2(n-p_0+1)}>\frac{n}{2}$, we can choose $p \in \left ( \frac{n}{2}, \min \left \{ p_0, \frac{p_0+1}{2(n-p_0+1) } \right \}\right ) $. Employing Hölder's inequality, we obtain 
    \begin{align*}
        \int_\Omega u^p \leq \left ( \int_\Omega u^{p_0}v^{-r} \right )^{\frac{p}{p_0}} \cdot \left ( \int_\Omega v^{\frac{pr}{p_0-p}} \right )^{\frac{p_0-p}{p_0}}.
    \end{align*}
   This, together with \eqref{C1.1}, implies that 
    \begin{align} \label{C1.2}
        \left \| u(\cdot,t) \right \|_{L^p(\Omega)} \leq c_1^{\frac{1}{p_0}}  \left \| v(\cdot,t) \right \|_{L^{\frac{pr}{p_0-p}}(\Omega)}^{\frac{r}{p_0}} \qquad \text{for all }t \in (0,T_{\rm max}).
    \end{align}
    Since $\frac{n}{2} \left ( \frac{1}{p}- \frac{p_0-p}{pr} \right )<1$ when $p<\frac{p_0+1}{2(n-p_0+1)}$, we apply Lemma \ref{v} to obtain that 
    \begin{align*}
        \sup_{t\in (0,T)} \left \| v(\cdot,t) \right \|_{L^{\frac{pr}{p_0-p}}(\Omega)} \leq c_2 \left (1+  \sup_{t\in (0,T)}  \int_\Omega u^p(\cdot,t) \right ),
    \end{align*}
    for some $c_2>0$ independent of $T$. This, conjunction with \eqref{C1.2} deduces that 
    \begin{align*}
        M(T) \leq c_3 \left (1+M(T)^{\frac{r}{p_0}} \right ) \qquad \text{for any }T<T_{\rm max},
    \end{align*}
    for some $c_3>0$ independent of $T$. Since $\frac{r}{p_0}<1$, this entails that  $M(T)\leq c_4$, for some $c_4>0$ independent of $T$, which further implies that $u \in L^\infty \left ( (0,T_{\rm max}); L^p(\Omega) \right )$.
\end{proof}
The following lemma enables us to derive \( L^\infty(\Omega \times (0, T_{\rm max})) \) bounds for the solution under the assumption that \( u \in L^\infty((0, T_{\rm max}); L^{p}(\Omega)) \) for some $p>\frac{n}{2}$.

\begin{lemma} \label{Linf}
    Let $\chi>0$, and suppose that $u \in L^\infty \left ((0,T_{\rm max});L^p(\Omega) \right )$ for some $p>\frac{n}{2}$. Then $u \in L^\infty \left ((0,T_{\rm max});L^\infty(\Omega) \right )$. 
\end{lemma}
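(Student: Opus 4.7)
The plan is to first upgrade the regularity of $v$ using the hypothesis on $u$, and then run a bootstrap argument on $u$ based on the Neumann heat semigroup. Since $p > \tfrac{n}{2}$, Lemma~\ref{v}(i) with $\theta = p$ and $\mu = \infty$ yields a uniform bound $\|v(\cdot,t)\|_{L^\infty(\Omega)} \leq C$ for $t \in (0, T_{\rm max})$. Similarly, choosing $q_0 > n$ close enough to $n$ so that $\tfrac{1}{2} + \tfrac{n}{2}\bigl(\tfrac{1}{p} - \tfrac{1}{q_0}\bigr) < 1$ (which is possible because $p > \tfrac{n}{2}$ forces $\tfrac{1}{p} - \tfrac{1}{n} < \tfrac{1}{n}$), Lemma~\ref{v}(ii) gives a uniform bound on $\|\nabla v(\cdot,t)\|_{L^{q_0}(\Omega)}$. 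Combining this with Lemma~\ref{low} (which provides $v \geq \eta > 0$), the vector field $\nabla \ln v = \nabla v / v$ is uniformly bounded in $L^{q_0}(\Omega)$ on $(0, T_{\rm max})$ for some $q_0 > n$.

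Next, I would rewrite the first equation of \eqref{1} in the divergence form $u_t = \Delta u - \chi \nabla \cdot (u\, \nabla \ln v)$ and use the variation-of-constants representation
\begin{align*}
u(\cdot, t) = e^{(t - t_0)\Delta_N} u(\cdot, t_0) - \chi \int_{t_0}^{t} e^{(t - s)\Delta_N} \nabla \cdot \bigl(u(\cdot, s)\, \nabla \ln v(\cdot, s)\bigr) \, ds,
\end{align*}
where $\Delta_N$ denotes the Neumann Laplacian. Using the standard $L^a$–$L^b$ smoothing estimates for the Neumann heat semigroup, in particular $\|e^{\tau \Delta_N} \nabla \cdot f\|_{L^b(\Omega)} \leq C \tau^{-\frac{1}{2} - \frac{n}{2}(\frac{1}{a} - \frac{1}{b})} \|f\|_{L^a(\Omega)}$ for $1 \leq a \leq b \leq \infty$, together with Hölder's inequality
\begin{align*}
\|u \nabla \ln v\|_{L^a(\Omega)} \leq \|u\|_{L^b(\Omega)} \|\nabla \ln v\|_{L^{q_0}(\Omega)} \quad \text{with } \tfrac{1}{a} = \tfrac{1}{b} + \tfrac{1}{q_0},
\end{align*}
yields a self-improving estimate that raises the integrability of $u$: whenever $u$ is uniformly bounded in $L^b(\Omega)$, it becomes uniformly bounded in $L^r(\Omega)$ for some $r > b$ chosen so that $\tfrac{1}{2} + \tfrac{n}{2}\bigl(\tfrac{1}{b} + \tfrac{1}{q_0} - \tfrac{1}{r}\bigr) < 1$, because $q_0 > n$ makes the corresponding time integral $\int_{t_0}^t (t-s)^{-\alpha} ds$ convergent for the exponent $\alpha = \tfrac{1}{2} + \tfrac{n}{2}(\tfrac{1}{b} + \tfrac{1}{q_0} - \tfrac{1}{r}) < 1$. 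Iterating this step finitely many times, starting from the assumed $L^p$-bound with $p > \tfrac{n}{2}$, eventually yields a uniform $L^\infty$-bound on $u$ (after, say, one more application with $r = \infty$, valid once $b$ is large enough that $\tfrac{n}{2}(\tfrac{1}{b} + \tfrac{1}{q_0}) < \tfrac{1}{2}$, which is achievable since $q_0 > n$).

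The main technical obstacle is that $\nabla \ln v$ is not available in $L^\infty$, only in $L^{q_0}$ for some $q_0 > n$, so a naive argument with a bounded drift is unavailable. However, the condition $q_0 > n$ is precisely the threshold that keeps the kernel singularity $(t-s)^{-\frac{1}{2} - \frac{n}{2q_0}}$ integrable, which is what makes the bootstrap work. An alternative, essentially equivalent, route is to test the $u$-equation with $u^{k-1}$ for $k$ running through a geometrically increasing sequence, use Young's inequality to absorb the cross term into the diffusion, and apply Hölder together with the Gagliardo--Nirenberg inequality to the remainder, leading to a Moser-type recursion whose limit also gives the desired $L^\infty$-bound. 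Either approach completes the proof.
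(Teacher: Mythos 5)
Your proposal is correct and follows essentially the same route as the paper: the paper's proof simply invokes the standard Neumann heat semigroup technique of Lemma 3.4 in \cite{Winkler+2011}, combined with the lower bound for $v$ from Lemma~\ref{low} and the gradient estimate of Lemma~\ref{v}(ii), which is exactly the bootstrap you spell out (control $\nabla \ln v$ in $L^{q_0}$ for some $q_0>n$, then iterate the variation-of-constants smoothing estimates). The only difference is that you make the argument explicit where the paper cites it.
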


\begin{proof}
    Thanks to a uniform lower bound for $v$ as established Lemma \ref{low}, we follow the standard heat semi-group technique (see Lemma 3.4 in \cite{Winkler+2011}) with the assistance of Lemma \ref{v}(ii) to prove that  $u \in L^\infty \left ((0,T_{\rm max});L^\infty(\Omega) \right )$.   
\end{proof}
We are now in a position to prove our main result.
\begin{proof}[Proof of Theorem \ref{thm}]
    Applying Corollary \ref{C1}, Lemma \ref{Linf}, Lemma \ref{v} and the extensibility property of solution \eqref{ext} as established in Lemma \ref{local} immediately proves the desired result.
\end{proof}

\section*{Acknowledgment}
The author gratefully thanks the anonymous referees for their valuable comments and suggestions. This work was supported by the Hangzhou Postdoctoral Research Grant (No.~207010136582503) and National Postdoctoral Talent Program.

    \end{document}